\documentclass[11pt]{amsart}
\usepackage[latin1]{inputenc}
\usepackage{color}
\usepackage{listings}
\usepackage{float}
\floatstyle{ruled}
\usepackage{caption}
\usepackage{amsmath}
\usepackage{amssymb}
\usepackage{bbm}
\usepackage{enumerate}
\restylefloat{figure}
\captionsetup[figure]{format=plain,
justification=justified,
margin=10pt,
labelsep=colon,
labelfont=sc,
textfont=normal,
textformat=period}

\newtheorem{thm}{Theorem}[section]
\newtheorem{cor}[thm]{Corollary}
\newtheorem{lem}[thm]{Lemma}

\theoremstyle{definition}

\numberwithin{equation}{section}

\frenchspacing
\textwidth=14.5cm
\textheight=23cm
\parindent=16pt
\makeatletter
\@ifpackageloaded{showkeys}{\oddsidemargin=3cm\evensidemargin=3cm}{\oddsidemargin=0.7cm\evensidemargin=0.7cm}\makeatother
\topmargin=-0.5cm

\newcommand{\lin}{\operatorname{span}}
\newcommand{\supp}{\operatorname{supp}}

\newcommand{\dif}{\,\mathrm{d}}
\newcommand{\sign}{\operatorname{sign}}
\newcommand{\charfun}{\ensuremath{\mathbbm 1}}


\DeclareMathOperator{\sgn}{sgn}

\allowdisplaybreaks

\begin{document}
\title{Projection operators onto spaces of Chebyshev splines}
\author[K. Keryan]{Karen Keryan}
\address{Yerevan State University, Alex Manoogian 1, 0025 Yerevan, Armenia,
	American University of Armenia, Marshal Baghramyan 40, 0019 Yerevan, Armenia}
\email{karenkeryan@ysu.am, kkeryan@aua.am}

\author[M. Passenbrunner]{Markus Passenbrunner}
\address{Institute of Analysis, Johannes Kepler University Linz, Austria, 4040 Linz, Altenberger Strasse 69}
\email{markus.passenbrunner@jku.at}

\begin{abstract}
 We prove that the Chebyshev spline orthoprojectors are uniformly bounded
on $L^\infty$.
\end{abstract}
\maketitle
\section{Introduction}
In this paper, we extend Shadrin's theorem \cite{Shadrin2001} on the  
boundedness of the polynomial spline orthoprojector on $L^\infty$  by a constant 
that does not depend on the underlying  univariate grid $\Delta$
 to  the setting of Chebyshev
 splines.  The space of Chebyshev splines  $S(\mathcal U_k;\Delta)$
 of order $k$ consists of functions that, on each grid
interval of $\Delta$, are contained in $\mathcal
	U_k=\lin\{u_1,\ldots,u_k\}$, the space of 
linear
combinations of generalized monomials $u_1,\ldots, u_k$ that arise by iterated
integrals of positive weight functions in the same way as the classical
monomials  $1,\ldots,x^{k-1}$ arise as iterated integrals of constant functions.


One of the main reasons to consider this extension to spline
orthoprojectors onto Chebyshev splines is that in recent years, it turned out that in many cases
 (see e.g. \cite{Shadrin2001, PassenbrunnerShadrin2014,
Passenbrunner2014, MuellerPassenbrunner2017, Passenbrunner2017,
KeryanPassenbrunner2017}),
sequences of   orthogonal projections onto classical spline spaces  corresponding to arbitrary 
grid sequences  $\Delta_n$
behave like sequences of conditional
expectations (or, more generally, like  martingales) and we
want to extend martingale type
results to an even larger class of orthogonal projections.

In order to explain those martingale type results, we have to introduce a little bit of
terminology: Let $k$ be a positive integer, $(\mathcal F_n)$ an increasing
sequence of $\sigma$-algebras of sets in $[0,1]$ where each $\mathcal F_n$ is generated 
by a finite partition of $[0,1]$ into intervals of positive length. Moreover,
let
\[
	S_n^{(k)} = \{f\in C^{k-2}[0,1] : f\text{ is a classical polynomial of order $k$ on
	each atom of $\mathcal F_n$} \}
\]
and  define $P_n^{(k)}$ as the orthogonal projection operator onto
$S_n^{(k)}$ with respect to the
$L^2$  inner product on $[0,1]$  with the Lebesgue measure  $|\cdot|$.  The space $S_n^{(1)}$ consists of
piecewise constant functions and $P_n^{(1)}$ is the conditional expectation
operator with respect to the $\sigma$-algebra $\mathcal F_n$.
 Similarly to the definition of martingales, we introduce the following
notion:
let $(f_n)_{n\geq 0}$ be a sequence of integrable functions, we call this
sequence a $k$-martingale spline sequence (adapted to $(\mathcal F_n)$), if
\[
	P_n^{(k)} f_{n+1} = f_n,\qquad n\geq 0.
\]

 Classical martingale theorems such as Doob's
	inequality, the martingale convergence theorem or Burkholder's
	inequality in fact carry over to
	$k$-martingale spline sequences corresponding to  \emph{arbitrary}
	filtrations ($\mathcal F_n$) of the above type. Indeed, we have
		\begin{enumerate}[(i)]
			\item\label{it:splines1} (Shadrin's theorem) 
				there exists
				a constant $C_k$ depending only on $k$ such that 
				\[
					\sup_n\| P_n^{(k)} : L^1 \to L^1 \| \leq
					C_k,
				\]
			\item \label{it:splines2}
				there exists a constant $C_k$  depending only on
				$k$ such that for any $k$-martingale
				spline sequence
				$(f_n)$ and any
				$\lambda>0$, 
				\begin{equation*}
					|\{ \sup_n |f_n| > \lambda \}| \leq C_k
				\frac{\sup_n\|f_n\|_{L^1}}{
				\lambda},
				\end{equation*}

			\item\label{it:splines3} for all $p\in (1,\infty]$ there exists a constant
				$C_{p,k}$  depending only on $p$ and
				$k$ such that for all $k$-martingale
				spline sequences
				$(f_n)$,
				\begin{equation*}
					\big\| \sup_n |f_n| \big\|_{L^p}
					\leq C_{p,k}
					\sup_n\|f_n\|_{L^p},\ 
				\end{equation*}
			\item\label{it:splines4}

				if $(f_n)$ is an
				$L^1$-bounded $k$-martingale spline sequence, then
				$(f_n)$ converges
				 almost surely to some $L^1$-function.

			\item	for all $p\in(1,\infty)$ and all positive 
				integers $k$, scalar-valued $k$-spline-differences
				converge unconditionally in $L^p$, i.e. for all
				$f\in L^p$,
				\begin{equation*}
					\big\|\sum_n \pm (P_n^{(k)} -
					P_{n-1}^{(k)})f \big\|_{L^p}
					\leq C_{p,k} \|f\|_{L^p},
				\end{equation*}
				for some constant $C_{p,k}$ depending only on
				$p$ and $k$.
		\end{enumerate}
	Note that	the $L^1$-boundedness stated in property (i) and $L^\infty$-boundedness of the
		operators $P_n^{(k)}$ are equivalent, as $P_n^{(k)}$ being an
		orthogonal projection operator is also  
	self-adjoint.
	\eqref{it:splines1} is proved in \cite{Shadrin2001},
	vector valued versions of \eqref{it:splines2}--\eqref{it:splines4}
	are proved  in \cite{PassenbrunnerShadrin2014,
				MuellerPassenbrunner2017} and (v) is proved in
				\cite{Passenbrunner2014}.
 The basic starting point in proving the results (ii)--(v) independently
of the filtration $(\mathcal F_n)$ is that (i) is true  and in this paper we
prove the analogue of (i) for Chebyshev spline spaces (see Section
\ref{sec:prel} for exact definitions and properties of Chebyshev splines):

 \begin{thm}\label{thm:general}
	 There exists a  finite positive constant $C$,  depending only on $\mathcal U_k$, 
	 so that for all partitions $\Delta$ of the
	unit interval $[0,1]$, the orthogonal projection operator $P_\Delta$ onto 
	the space of Chebyshev splines $S(\mathcal U_k;\Delta)$ is bounded on
	$L^\infty$ by the constant $C$, i.e.,
	\[
		\| P_\Delta f\|_\infty \leq C\|f\|_\infty,\qquad f\in
		L^\infty[0,1].
	\]
\end{thm}

 The organization of this article is as follows:
In Section 2, we recall basic definitions and facts involving Chebyshev spline
functions. Next, in Section 3, we prove Theorem \ref{thm:general} 'at the
boundary',
meaning that we prove $|P_\Delta f(0)|$ and $|P_\Delta f(1)|$ are bounded by
$C\|f\|_\infty$ with a constant $C$ independent of $\Delta$, provided $\Delta$
satisfies some additional conditions.
Finally, in Section 4, we prove Theorem \ref{thm:general} by reducing the
general case to the boundary and showing how the conditions on $\Delta$ from
Section 3 can be eliminated.

\section{Preliminaries}\label{sec:prel}
 As a basic reference to Chebyshev splines, we use the book
	\cite{Schumaker1981}, in particular Chapter~9.
Suppose that for a non-negative integer $k$, $w=(w_1,\ldots,w_k)$ is a vector consisting of $k$ positive
functions (weights) on $I=[a,b]$ with $w_i\in C^{k-i+1}$
for all
$i\in\{1,\ldots,k\}$. Then, let
$T f (x) = \int_a^x f(y)\dif y$ be the operator of integration, and $m_g f(x) = g(x) f(x)$  the
multiplication operator by the function $g$ and
define  the vector $u(w) = (u_1,\ldots,u_k)$ of functions by 
\begin{align*}
	u_i &= \Big(\prod_{\ell=1}^{i-1} m_{w_\ell} T\Big) w_i =
	w_1(\cdot)\int_a^\cdot
	w_2(s_2)\cdots \int_a^{s_{i-1}} w_{i}(s_{i}) \dif s_{i} \cdots \dif s_2,
\end{align*}
for all $i=1,\ldots,k$.
Let $t_1\leq t_2\leq \cdots \leq t_k$ be an increasing sequence of real numbers.
We set
\begin{equation*}
	d_i := \max \{ 0\leq j\leq k-1 : t_i = \cdots = t_{i-j} \}
\end{equation*}
and define the expression
\begin{equation}\label{eq:defD}
	D\begin{pmatrix}
		t_1,\ldots,t_k \\
		u_1,\ldots,u_k\end{pmatrix} := \det \big( D^{d_i} u_j (t_i)
	\big)_{i,j=1}^k,
\end{equation}
where $D$ denotes the ordinary differential operator.
The functions $u(w)$ form an \emph{extended complete
Chebyshev} system (ECT system for short), i.e., for all $j=1,\ldots,k$ and all
points $t_1\leq t_2 \leq \cdots \leq t_j$, we have that
\begin{equation}\label{eq: ECT>0}
	D\begin{pmatrix}
		t_1,\ldots,t_j \\
		u_1,\ldots,u_j\end{pmatrix}>0. 
\end{equation}
An example  of an ECT is  the set of monomials $\{1,x,\ldots,x^{k-1}/(k-1)!\}$, which corresponds to the choice $a=0$ and $w_1 = \cdots = w_k = 1$. 
In working with ECT systems, we conveniently define the corresponding
differentiation operators
\begin{equation*}
	D_i f = D\Big( \frac{f}{w_i} \Big),\qquad i=1,\ldots,k.
\end{equation*}
Additionally, put
\begin{equation*}
	L_i = D_i D_{i-1} \cdots D_1, \qquad i=1,\ldots,k
\end{equation*}
and $L_0 f = f$.
If $U_k = \{u_1,\ldots, u_k\}$ is an ECT, then $\mathcal U_k := \lin U_k$ is the null space of
the operator $L_k$, and, by definition of $u_i$,
\begin{equation*}
	L_j u_i(a) = w_i(a) \delta_{j,i-1},\qquad j=0,\ldots,i-1, \quad   i=1,\ldots,k.
\end{equation*}
We also set
\begin{equation}\label{eq:detL}
	D_{(L_0,\ldots, L_{k-1})}\begin{pmatrix}
	t_1,\ldots,t_k \\
		u_1,\ldots,u_k\end{pmatrix} := \det \big( L_{d_i} u_j (t_i)
	\big)_{i,j=1}^k.
\end{equation}
 In this article, we write  $A(t)\lesssim B(t)$ to mean that there exists a
constant $C$ that depends only on $\min_{x\in I} w_i(x)$ and on 
$\max_{x\in I} |D^j w_i(x)|$ for
$1\leq i\leq k$ and $0\leq
j\leq k-i+1$ so that $A(t)\leq CB(t)$ for all $t$, where $t$ denotes all
implicit and explicit dependencies that the objects $A$ and $B$ might have.
Similarly, we use the notations $\gtrsim$ and $\simeq$.

The determinant \eqref{eq:detL} and its derivatives can be compared to the classical Vandermonde determinant
$V(t_1,\ldots,t_k)$ 
that results from inserting the functions $u_j = t^{j-1}$ into \eqref{eq:defD}.
Indeed, 
 \begin{equation}\label{eq:vandermonde}
	\begin{aligned}
	\Big|L_j D_{(L_0,\ldots,
		L_{k-1})}&\begin{pmatrix}
		t_1,\ldots,t_{k-1},x \\
		u_1,\ldots,u_k\end{pmatrix}\Big| 
	\simeq |D^j
	V(t_1,\ldots,t_{k-1},x)|,
\end{aligned}
\end{equation}
where the differentiation operators $L_j$ and $D$ act with respect to the
variable $x$.
Also, for $u\in\mathcal U_k$, we have the following Markov inequality for all $1\leq p,q\leq
\infty$:
\begin{equation}\label{eq:markov}
	\|D^j u\|_{L^p(I)} \lesssim |I|^{-j+1/p-1/q}\|u\|_{L^q(I)},
\end{equation}
where the implied constant additionally may depend on $p$ and $q$.
As for algebraic polynomials, any non-zero function $u\in \mathcal U_k$ has at
most $k-1$ zeros including multiplicities.
We can associate a divided difference to the system $u(w)$
\begin{equation*}
	[t_1, \ldots, t_{k}]_{u(w)} f = \frac{	D\begin{pmatrix}
		t_1,\ldots,t_{k} \\
		u_1,\ldots,u_{k-1},f\end{pmatrix}}{	D\begin{pmatrix}
		t_1,\ldots,t_{k} \\
		u_1,\ldots,u_{k}\end{pmatrix}}
\end{equation*}
and observe that $[t_1,\ldots,t_{k}] u = 0$ for all
$u\in\lin\{u_1,\ldots,u_{k-1}\}$ and $[t_1,\ldots,t_{k}] u_{k}=1$.
If $u(w) = (u_1,\ldots,u_k)$, we define the truncated vector $\bar u (w) =
(u_1,\ldots,u_{k-1})$.
Similarly to the classical divided differences,  the following recursion formula 
for $k\geq 2$, $t_1 \neq t_k$ and sufficiently smooth
functions $f$ is true:
\begin{equation*}
	[t_1,\ldots, t_{k}]_{u(w)} f = \frac{[t_2,\ldots,t_{k}]_{\bar u(w)} f -
	[t_1,\ldots,t_{k-1}]_{\bar u(w)}f}{[t_2,\ldots,t_{k}]_{\bar u(w)}u_{k} -
	[t_1,\ldots,t_{k-1}]_{\bar u(w)} u_{k}}.
\end{equation*}

Given the weights $w=(w_1,\ldots,w_k)$ and the corresponding ECT-system $u(w)=(u_1,\ldots,
u_k)$, we define the dual canonical ECT-system $u^*(w) =
(u_1^*,\ldots,u_k^*,u_{k+1}^*)$ by
\begin{equation}\label{eq:dualU}
	u_i^* =  \Big(\prod_{\ell=1}^{i-1} T m_{w_{k-\ell+1}}\Big)
	\charfun_{I},\qquad i = 1,\ldots, k+1,
\end{equation}
and the associated operators $D_i^* f = (Df)/w_{i}$ for $i=1,\ldots,k$ and
$L_i^* = D_{k-i+1}^* \cdots D_k^*$ for $i=1,\ldots,k$ and $L_0^*f = f$.
%


We denote by $T_y f = \int_y^x f(s)\dif s$ and set
\begin{align*}
	h_j(x,y) = h_j^w(x,y) := \Big(\prod_{\ell=1}^{j-1} m_{w_\ell}
	T_y \Big)w_j(x),\quad 	g_j(x,y)=g_j^w(x,y) :=\charfun_{x\geq y}(x,y) h_j^w(x,y).
\end{align*}
The functions $g_j$ are the analogues of the truncated power functions $(x-y)_+^{j-1}$ for polynomials.
\subsection{Chebyshevian spline functions}\label{sec:chebsplines}
Let $\Delta= \{a = t_0=\cdots =t_{k-1}<t_k<\cdots< t_{n}< t_{n+1} = \cdots =
t_{n+k}=b\}$ and define
the Chebyshevian spline space $S(\mathcal U_k;\Delta)$ as 
\begin{align*}
	S(\mathcal U_k;\Delta) &= \{ s\in C^{k-2}(I): \text{there exist functions }  \\
	&\qquad s_{k-1},\ldots, s_n\in\mathcal U_k\text{ so that }
	s|_{(t_j,t_{j+1})} = s_j\text{ for all }j = k-1,\ldots,n\},
\end{align*}
where $\mathcal U_k = \lin U_k = \lin \{u_1,\ldots,u_k\}$.
Denote by $|\Delta| = \max_{i= 0}^n (t_{i+1} - t_i)$   the maximal
grid size of $\Delta$.
We define the Chebyshevian B-spline function $M_i^w$ for weights $w=
(w_1,\ldots,w_k)$ and for $x\in[a,b)$ by
\begin{equation*}
	M_i(x) = M_i^{w} (x) = (-1)^k [t_i,\ldots, t_{i+k}]_{u^*(w)} g_k(x,y),\qquad
	i=0,\ldots, n,
\end{equation*}
where the divided difference is taken with respect to the variable $y$. We
define $M_i(b)$ by extending $M_i$ continuously.
The system of functions $(M_i)$ forms an
algebraic basis of the space $S(\mathcal U_{k};\Delta)$ and we have
\begin{equation*}
	M_i>0 \text{ on } (t_i,t_{i+k}),\qquad M_i=0 \text{ on }
	(t_i,t_{i+k})^c.
\end{equation*}
We also have the Peano representation
\begin{equation*}
	[t_i,\ldots,t_{i+k}]_{u^*(w)} f = \int_{t_i}^{t_{i+k}} M_i(x)
	L_k^*f(x)\dif x
\end{equation*}
for sufficiently smooth functions $f$.
Setting $f=u_{k+1}^*$, we obtain that $M_i$ is $L^1$-normalized:
\begin{equation*}
	\int_{t_i}^{t_{i+k}} M_i(x)\dif x = 1.
\end{equation*}

Similar to classical polynomial splines, a nonzero function in $S(\mathcal U_k;\Delta)$ has at most
$\operatorname{dim}S(\mathcal U_k;\Delta)-1= n$ zeros.

Let $K:\Omega\times \Sigma \to \mathbb R$ be defined on two totally ordered sets
$\Omega,\Sigma$. Then $K$ is called totally positive if for any integer $r$ and any
choice $\omega_1\leq
\cdots \leq\omega_r$ of elements in $\Omega$ and $\sigma_1\leq \cdots \leq \sigma_r$ of
elements in $\Sigma$, the determinant of the matrix
$\big(K(\omega_i,\sigma_j)\big)_{i,j=1}^r$ is non-negative.
We note that the kernel $K(i,t)= M_i(t)$ is  totally positive
(see e.g. \cite[Theorem 9.34]{Schumaker1981} or
\cite[pp. 527]{Karlin1968}).
Using the basic composition formula for determinants (see e.g. \cite[p.
17]{Karlin1968}), we also get that the kernel $K(i,j)=\langle M_i, M_j\rangle$
is totally positive.

Additionally, we are able to perform Hermite interpolation; the simplest form is
the following: assume that $y_0 < \cdots < y_n$ are points that satisfy $y_i\in
(t_i,t_{i+k})$. Then, for all vectors $(v_j)_{j=0}^n$, there exists a unique function $s\in S(\mathcal
U_k;\Delta)$ so that 
\begin{equation}
	\label{eq:hermite}
	s(y_i) = v_i,\qquad i=0,\ldots,n.
\end{equation}

\subsection{Renormalized B-spline functions}
In the following, let $\phi_{i,r}$ be the function contained in
$\lin\{u_1^*,\ldots,u_r^*\}$ with zeros at the points $t_i,\ldots,t_{i+r-2}$
(including multiplicities) and  having 
leading coefficient $1$, i.e., $\phi_{i,r}$ is of the form
\begin{equation*}
	\phi_{i,r} = u_r^* + \sum_{j=1}^{r-1} c_j u_j^*
\end{equation*}
for some numbers $(c_j)_{j=1}^{r-1}$.
 Observe that, if $s$ is not contained in
 $\{t_{i},\ldots,t_{i+r-2}\}$,  
\begin{equation}\label{eq:phi}
	\phi_{i,r}(s) = \frac{ D\begin{pmatrix}
		t_i,\ldots,t_{i+r-2}, s \\
		u_1^*,\ldots,u_{r-1}^*,u_{r}^*\end{pmatrix} } { D\begin{pmatrix}
		t_1,\ldots,t_{i+r-2} \\
		u_1^*,\ldots,u_{r-1}^*\end{pmatrix} } = \frac{
		D_{(L_0^*,\ldots,L_{r-1}^*)}\begin{pmatrix}
		t_i,\ldots,t_{i+r-2}, s \\
		u_1^*,\ldots,u_{r-1}^*,u_{r}^*\end{pmatrix} } {
		D_{(L_0^*,\ldots,L_{r-2}^*)}\begin{pmatrix}
		t_1,\ldots,t_{i+r-2} \\
		u_1^*,\ldots,u_{r-1}^*\end{pmatrix} },
\end{equation}
and the difference $\phi_{i,k+1} - \phi_{i+1,k+1}$ is contained in
$\lin\{u_1^*,\ldots,u_k^*\}$ and has zeros at $t_{i+1},\ldots,t_{i+k-1}$
including multiplicities.
Therefore, there exists a number $\alpha_i$ so that 
\begin{equation}\label{eq:phi_relation}
	\phi_{i,k+1} - \phi_{i+1,k+1} = \alpha_i \phi_{i+1,k}.
\end{equation}
 We now verify that $\alpha_i$ is positive. 
If $\mu$ denotes the multiplicity of $t_{i+k}$ in the set
$\{t_{i+1},\ldots,t_{i+k-1}\}$, then
\begin{align*}
	D^j \phi_{i+1,k+1}(t_{i+k}) &= 0,\qquad 0\leq j\leq \mu, \\
	D^j \phi_{i,k+1}(t_{i+k}) &= D^j \phi_{i+1,k}(t_{i+k}) = 0,\qquad 0\leq
	j\leq \mu-1.
\end{align*}
We additionally have that 
\begin{equation*}
	D^\mu \phi_{i,k+1}(t_{i+k}) \neq 0,\qquad \text{and}\qquad D^\mu
	\phi_{i+1,k}(t_{i+k}) \neq 0,
\end{equation*}
for otherwise this, together with the other conditions imposed on $\phi_{i,k+1}$
and $\phi_{i+1,k}$ would imply $\phi_{i,k+1}\equiv 0 $ or $\phi_{i+1,k}\equiv 0$
which is impossible. Now we also know for $s> t_{i+k}$ that $\phi_{i,k+1}(s)>0$
and $\phi_{i+1,k}(s)>0$ by formula \eqref{eq:phi} and the fact that
$\{u_1^*,\ldots,u_{k+1}^*\}$ is an ECT system. Thus, by Taylor expansion,
\begin{equation*}
	D^\mu \phi_{i,k+1}(t_{i+k})>0\qquad \text{and}\qquad
	D^\mu\phi_{i+1,k}(t_{i+k})>0,
\end{equation*}
or equivalently
\begin{equation*}
	L_\mu^* \phi_{i,k+1}(t_{i+k})>0\qquad \text{and}\qquad
	L_\mu^*\phi_{i+1,k}(t_{i+k})>0,
\end{equation*}
which yields $\alpha_i>0$ after inserting into formula \eqref{eq:phi_relation}:
\begin{equation}\label{eq:phiDmu}
	L_\mu^* \phi_{i,k+1}(t_{i+k}) = \alpha_i L_\mu^* \phi_{i+1,k}(t_{i+k}).
\end{equation}
 Equation \eqref{eq:phi} and the comparison to Vandermonde determinants
\eqref{eq:vandermonde}  imply that 
 \begin{equation}\label{eq:alphasupp}
 \alpha_i \simeq t_{i+k} - t_i.
\end{equation}
If we define the renormalized B-spline function
\begin{equation}\label{eq:defN}
	N_i(x) = \alpha_i M_i(x),
\end{equation}
then the collection of those functions forms a partition of unity in the sense
%
\begin{equation}\label{eq:partunity}
		\sum_{i} N_i(x) = u_1(x).
	\end{equation}

\subsection{Dual functionals to $N_i$}
Let $J_i=[t_\ell, t_{\ell+1}]$ be a 
 largest grid interval contained in the support
$[t_i,t_{i+k}]$ of $N_i$. Let $g$ be a smooth function on the real line 
with $g=0$ on
$(-\infty,0]$ and $g=1$ on $[1,\infty)$.
We define the functionals
\begin{equation*}
	\lambda_i f = \frac{1}{\alpha_i} \int_{t_i}^{t_{i+k}} f(x) L_k^*
	\psi_i(x)\dif x,
\end{equation*}
where $\psi_i(x) = \alpha_i \phi_{i+1,k}(x) G_i(x)$ 
and $G_i(x) := g(\frac{x-\inf
J_i}{|J_i|})$.
Those functionals are dual to the B-spline functions $N_i$ in the sense that
$\lambda_i N_j = \delta_{i,j}$ and,  for $p\in[1,\infty]$, they satisfy the inequality
\begin{equation}\label{eq:dual}
		|\lambda_i f| \lesssim C|J_i|^{-1/p}
	\|f\|_{L^p(J_i)},
	\end{equation}
	provided that $|\Delta| \leq 1$  and where the implied constants
	additionally depend on $p$. 
	 An important consequence is the following stability of the functions $(N_i)$ under the
condition that $|\Delta| \leq 1$: 
%
for all $p\in[1,\infty]$,
	\begin{equation*}
		\| \sum_{i} c_i N_i \|_{p} \simeq \Big(\sum_i |c_i|^p
		\alpha_i\Big)^{1/p},
	\end{equation*}
	where, again, the implied constants may 
	additionally depend on the parameter $p$.
	The proof of this result proceeds in the same way as  for
	polynomial splines (see e.g. \cite[p. 145]{DeVoreLorentz1993}).


	 \section{Boundary points}
   
The main result of  this section is the following theorem:

\begin{thm}\label{thm:endpoint}
	There exist constants $\varepsilon, K_1>0$, 
	 depending only on $\mathcal U_k$, 
		so that for all partitions $\Delta$ of the interval $[a,b]$
		satisfying  $|\Delta| \leq
	\varepsilon$, the orthogonal projection operator $P_\Delta$ onto the
	space of Chebyshev splines $S(\mathcal U_k;\Delta)$ satisfies
	\begin{equation*}
		|P_\Delta f(a)| + |P_\Delta f(b)|\leq
			K_1\|f\|_\infty,\qquad f\in L^\infty[a,b].
		\end{equation*}
	 \end{thm}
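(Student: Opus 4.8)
The plan is to reduce the endpoint estimate to a single coefficient of $P_\Delta f$ in the B-spline basis and then to control that coefficient by exploiting the total positivity and stability of the B-spline system. First I would expand the projection as $P_\Delta f = \sum_i a_i N_i$, where the coefficient vector $a=(a_i)$ solves the normal equations $G a = b$ with Gram matrix $G = (\langle N_i,N_j\rangle)_{i,j}$ and $b_i = \langle f, N_i\rangle$. Because $\supp N_i = [t_i,t_{i+k}]$ and, for $i\geq 1$, the point $a = t_0 = \cdots = t_{k-1}$ occurs with multiplicity at most $k-1$ among the knots $t_i,\ldots,t_{i+k}$ of $N_i$, each such $N_i$ is continuous at $a$ and hence $N_i(a)=\lim_{x\to a^-}N_i(x)=0$; together with the partition of unity $\sum_i N_i(x)=u_1(x)$ this yields $N_0(a)=u_1(a)$ and $N_i(a)=0$ for $i\geq 1$. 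Consequently $P_\Delta f(a) = u_1(a)\,a_0$, and since $u_1 = w_1$ satisfies $u_1(a)\lesssim 1$, it remains to bound $|a_0|\leq \|f\|_\infty\|N_0^*\|_1$, where $N_0^* := \sum_j (G^{-1})_{0j}N_j$ is the dual B-spline determined by $\langle N_0^*, N_j\rangle = \delta_{0,j}$.

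Second, using the stability estimate (valid since $|\Delta|\leq\varepsilon\leq 1$) together with $\|N_j\|_1 = \alpha_j$, I would pass from the $L^1$ norm of $N_0^*$ to a weighted $\ell^1$ norm of its coefficients, $\|N_0^*\|_1 \simeq \sum_j |(G^{-1})_{0j}|\,\alpha_j$. Thus the theorem at $a$ reduces to the mesh-independent bound $\sum_j |(G^{-1})_{0j}|\,\alpha_j\lesssim 1$, and the right endpoint $b$ is handled identically with $N_n^*$ in place of $N_0^*$.

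Third, and this is the main obstacle, I would establish the geometric decay of the first column of the inverse Gram matrix in the form $|(G^{-1})_{0j}|\max(\alpha_0,\alpha_j)\lesssim \gamma^{\,j}$ for some $\gamma\in(0,1)$ depending only on $\mathcal U_k$. Three structural facts feed into this: $G$ is banded, since $\langle N_i,N_j\rangle = 0$ once $|i-j|\geq k$; $G$ is totally positive, which follows from the total positivity of the kernel $\langle M_i,M_j\rangle$ (the entries of $G$ being positive multiples of these) and forces the checkerboard sign pattern $(-1)^{i+j}(G^{-1})_{ij}\geq 0$; and the symmetrically normalized matrix $D^{-1/2}GD^{-1/2}$, with $D=\operatorname{diag}(\alpha_i)$, has condition number controlled by $\mathcal U_k$, since the stability estimate with $p=2$ reads $c^{\!\top} G c \simeq c^{\!\top} D c$. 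From these one derives the Demko-type exponential decay of the inverse; the delicate point is extracting the $\max(\alpha_0,\alpha_j)$ normalization (rather than the weaker $\sqrt{\alpha_0\alpha_j}$), which is exactly what makes the ensuing sum converge uniformly over arbitrary meshes. Here the hypothesis $|\Delta|\leq\varepsilon$ is essential: on each grid interval it forces the Chebyshev system $u(w)$ to be a uniformly small perturbation of the polynomial case, so that the comparison with Vandermonde determinants \eqref{eq:vandermonde} and the Markov inequality \eqref{eq:markov} render the decay rate $\gamma$ and all constants dependent only on $\mathcal U_k$ and not on $\Delta$.

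Finally, granting the decay estimate the conclusion is immediate. Since $\alpha_j/\max(\alpha_0,\alpha_j)\leq 1$,
\[
	\sum_j |(G^{-1})_{0j}|\,\alpha_j \lesssim \sum_j \gamma^{\,j}\,\frac{\alpha_j}{\max(\alpha_0,\alpha_j)} \leq \sum_j \gamma^{\,j} = \frac{1}{1-\gamma}\lesssim 1,
\]
which gives $|P_\Delta f(a)|\lesssim \|f\|_\infty$. Running the same argument at the right endpoint yields $|P_\Delta f(b)|\lesssim\|f\|_\infty$, and adding the two estimates produces the constant $K_1$ (and the threshold $\varepsilon$) depending only on $\mathcal U_k$, as required.
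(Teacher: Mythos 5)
Your first two steps are correct and cleanly executed: at the left endpoint only $N_0$ survives, so $P_\Delta f(a)=u_1(a)\,a_0$ with $a_0=\langle f,N_0^*\rangle$, and the $p=1$ stability estimate indeed reduces the theorem to the mesh-independent bound $\sum_j|(G^{-1})_{0j}|\,\alpha_j\lesssim 1$. The gap is your third step, which contains the entire difficulty and is not proved. The decay estimate $|(G^{-1})_{0j}|\max(\alpha_0,\alpha_j)\lesssim\gamma^j$ is precisely Corollary \ref{cor:geom} of the paper, and there it is obtained as a \emph{consequence} of the uniform $L^\infty$-boundedness of $P_\Delta$ (Theorem \ref{thm:generalwithepsilon}), which in turn rests on the endpoint estimate you are proving; invoking it here is circular. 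What your three structural inputs (bandedness of $G$, total positivity, $\ell^2$-conditioning of $D^{-1/2}GD^{-1/2}$) actually deliver via the Demko-type argument is only $|(G^{-1})_{0j}|\lesssim\gamma^j(\alpha_0\alpha_j)^{-1/2}$, and this weaker normalization genuinely fails for your sum: $\sum_j\gamma^j\sqrt{\alpha_j/\alpha_0}$ is unbounded over meshes with $\alpha_j\simeq\alpha_0R^j$ and $R>\gamma^{-2}$, which remain admissible no matter how small $|\Delta|$ is, since the hypothesis $|\Delta|\leq\varepsilon$ imposes no lower bound on grid intervals and no bound on local mesh ratios. Your appeal to $|\Delta|\leq\varepsilon$ making the system a small perturbation of the polynomial case cannot close this: the same normalization problem is already present for polynomial splines ($w_i\equiv1$), where there is no perturbation at all, and upgrading $\sqrt{\alpha_0\alpha_j}$ to $\max(\alpha_0,\alpha_j)$ uniformly in the mesh is exactly the substance of de Boor's conjecture, open for decades until Shadrin's proof. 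Total positivity gives the checkerboard sign pattern of $G^{-1}$, but no magnitude improvement of the required kind.

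It is instructive to compare with how the paper sidesteps any quantitative control of $G^{-1}$. It constructs a spline $\phi$ (via the auxiliary function $\sigma$ vanishing at all interior knots, with prescribed boundary data) such that $\langle\phi,M_j\rangle$ alternates in sign and satisfies $|\langle\phi,M_j\rangle|\geq c$ uniformly (Theorem \ref{thm: phi_2}; the smallness $|\Delta|\leq\varepsilon$ enters only there, through the first-integral argument of Lemma \ref{lem:sigma k} and the Markov inequality). Then only the \emph{qualitative} checkerboard property of $G^{-1}$ is used (Lemma \ref{lem: ci alter}) to show that the coefficients $c_i$ of the Dirichlet kernel $K(a,\cdot)=\sum_i c_iM_i$ alternate, whence $\phi(a)=\sum_i c_i\langle\phi,M_i\rangle=\sum_i|c_i|\,|\langle\phi,M_i\rangle|\geq c\sum_i|c_i|\gtrsim\int_a^b|K(a,t)|\,\mathrm{d}t$, which is $\sup_{\|f\|_\infty\leq1}|P_\Delta f(a)|$. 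To repair your argument along your own lines you would have to prove the $\max(\alpha_0,\alpha_j)$-normalized decay from scratch, which amounts to redoing Shadrin's theorem rather than providing an independent route to it.
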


 Similarly as it is done in 
\cite{Shadrin2001} and \cite{Golitschek2014} for polynomial splines,
we  
construct a function $\phi\in S(\mathcal U_k;\Delta)$ with the
properties that the sign of $\langle \phi,M_j\rangle$ is alternating in $j$ and
that, for all $j$, $|\langle \phi,M_j\rangle| \gtrsim 1$. The basic idea of the
construction of such a function $\phi$ is the same as in \cite{Shadrin2001},
but we have to deal with the fact that the weights $(w_i)$ now are arbitrary
functions.
In Section 3.1, we give a few estimates and formulas on the B-spline functions
$(M_i)$ that are needed subsequently. In Section 3.2, the function $\phi$ is
defined and it is shown that it has the desired properties and in Section 3.3,
we show how those properties imply Theorem \ref{thm:endpoint}.
 

\subsection{Estimates for the derivatives of  B-spline functions}
The following formula for the derivative of the B-spline functions $M_i$ in
terms of B-splines of lower order is a consequence of the recursion formula for
divided differences.
\begin{lem}\label{lem:M deriv}Let $w=(w_1,\ldots,w_k)$, $\bar w= (w_2,\ldots,w_k)$  and $u^*(w) =
	(u^*_1,\ldots u^*_{k+1})$ be the dual system to $u(w)$.
	Then, for any $x\notin \{t_i, \ldots, t_{i+k}\}$,
	\begin{equation}\label{eq:M deriv}
			D_1 M_i^w(x)=
			 \left(\frac{M_i^w(x) }{w_1(x) }\right)' =
			 \frac{M_i^{\bar w}(x)-M_{i+1}^{\bar
		w}(x)}{h^w_i},
	\end{equation}
	where
	$$h^w_i=[t_{i+1},\ldots,t_{i+k}]_{u^*(\bar w)} u_{k+1}^*
		-[t_{i},\ldots,t_{i+k-1}]_{u^*(\bar w)} u_{k+1}^* .$$
	Additionally, 
	\begin{equation}\label{eq:est_h}
		0 < h_i^w \lesssim t_{i+k} - t_i.
	\end{equation}
\end{lem}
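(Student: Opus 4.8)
The plan is to first establish the differentiation formula \eqref{eq:M deriv} by reducing the $x$-derivative of the truncated power $g_k^w$ to the lower-order truncated power $g_{k-1}^{\bar w}$ and then invoking the divided difference recursion, and afterwards to read off the sign and the size of $h_i^w$ from \eqref{eq:M deriv} together with the elementary properties of the B-splines. First I would compute $\partial_x\big(g_k^w(x,y)/w_1(x)\big)$: writing out the nested integrals in $h_k^w(x,y) = w_1(x)\int_y^x w_2(s_2)\cdots\int_y^{s_{k-1}} w_k(s_k)\dif s_k\cdots \dif s_2$, dividing by $w_1(x)$ and differentiating the outermost integral in $x$ produces exactly $h_{k-1}^{\bar w}(x,y)$ for $x>y$. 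Since $h_k^w(y,y)=0$ for $k\ge 2$, the factor $\charfun_{x\ge y}$ contributes no boundary term, so that for $x\ne y$
\[
\frac{\partial}{\partial x}\frac{g_k^w(x,y)}{w_1(x)} = \charfun_{x\ge y}\,h_{k-1}^{\bar w}(x,y) = g_{k-1}^{\bar w}(x,y).
\]
Because the divided difference defining $M_i^w$ acts in the variable $y$ on the fixed node set $\{t_i,\dots,t_{i+k}\}$, and the hypothesis $x\notin\{t_i,\dots,t_{i+k}\}$ keeps these nodes away from the kink at $y=x$, differentiation in $x$ commutes with the functional, giving $D_1 M_i^w(x) = (-1)^k[t_i,\dots,t_{i+k}]_{u^*(w)}\, g_{k-1}^{\bar w}(x,\cdot)$.

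Next I would apply the recursion formula for divided differences to the system $u^*(w)=(u_1^*,\dots,u_{k+1}^*)$, whose truncation to the first $k$ functions is $u^*(\bar w)=(u_1^*,\dots,u_k^*)$; this identity of truncated dual systems I would verify directly from \eqref{eq:dualU}, using that $\bar w_{(k-1)-\ell+1}=w_{k-\ell+1}$. The recursion expresses the $(k+1)$-point divided difference through two $k$-point ones, and its denominator is precisely $[t_{i+1},\dots,t_{i+k}]_{u^*(\bar w)}u_{k+1}^*-[t_i,\dots,t_{i+k-1}]_{u^*(\bar w)}u_{k+1}^* = h_i^w$. Recognizing the two $k$-point numerators as $(-1)^{k-1}M_{i+1}^{\bar w}(x)$ and $(-1)^{k-1}M_i^{\bar w}(x)$ through the definition of the B-spline for the $(k-1)$-weight family $\bar w$, and collecting the sign $(-1)^k(-1)^{k-1}=-1$, yields \eqref{eq:M deriv}.

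For the estimate \eqref{eq:est_h} I would integrate \eqref{eq:M deriv} from $t_i$ and use $M_i^w(t_i)=0$ to obtain $M_i^w(x)/w_1(x) = \tfrac{1}{h_i^w}\int_{t_i}^x (M_i^{\bar w}-M_{i+1}^{\bar w})$. For $x$ slightly larger than $t_i$ the integrand is positive, since $M_i^{\bar w}>0$ there while $M_{i+1}^{\bar w}$, whose support starts at $t_{i+1}\ge t_i$, vanishes at $t_i$ to strictly higher order (its left knot carries one fewer copy of $t_i$); as the left-hand side is positive throughout $(t_i,t_{i+k})$, this forces $h_i^w>0$. Taking $x=x^*$ to be a maximizer of $M_i^w$ and discarding the nonnegative term $\int_{t_i}^{x^*}M_{i+1}^{\bar w}\ge 0$, together with $\int_{t_i}^{x^*}M_i^{\bar w}\le\int M_i^{\bar w}=1$, gives $M_i^w(x^*)/w_1(x^*)\le 1/h_i^w$, whence $h_i^w\le w_1(x^*)/M_i^w(x^*)\lesssim t_{i+k}-t_i$, because $\max M_i^w\ge (t_{i+k}-t_i)^{-1}$ by $L^1$-normalization and $w_1\lesssim 1$.

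The main obstacle I expect lies in the middle step: checking that the relevant truncation of the dual canonical system $u^*(w)$ really is $u^*(\bar w)$, keeping the point and function counts and the powers of $-1$ straight, and justifying the interchange of the $x$-differentiation with the $y$-divided difference, which is exactly where the hypothesis $x\notin\{t_i,\dots,t_{i+k}\}$ enters. The analytic estimates in the final step are then routine consequences of the positivity and the $L^1$-normalization of the B-splines.
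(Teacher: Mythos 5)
Your derivation of the differentiation formula \eqref{eq:M deriv} is correct and essentially identical to the paper's: differentiate $g_k^w(x,y)/w_1(x)$ in $x$ to get $g_{k-1}^{\bar w}(x,y)$, commute with the $y$-divided difference (legitimate since $x\notin\{t_i,\ldots,t_{i+k}\}$), verify $u^*(\bar w)=\bar u^*(w)$ from \eqref{eq:dualU}, and apply the recursion for divided differences; your sign bookkeeping is right. Your route to \eqref{eq:est_h}, however, diverges from the paper's, and this is where there is a genuine gap. You establish $h_i^w>0$ at the \emph{left} end of the support: integrating \eqref{eq:M deriv} from $t_i$ with $M_i^w(t_i)=0$ and arguing that $M_i^{\bar w}-M_{i+1}^{\bar w}>0$ near $t_i$ because $M_{i+1}^{\bar w}$ vanishes to strictly higher order. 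This fails precisely when $t_i$ has full multiplicity, $t_i=\cdots=t_{i+k-1}<t_{i+k}$ --- and this case is not vacuous: with the paper's boundary convention $t_0=\cdots=t_{k-1}=a$ it is exactly the first B-spline $M_0$ (mirrored at $b$), and Lemma \ref{lem: deriv M est} iterates \eqref{eq:M deriv} on splines whose boundary knots carry even higher multiplicity. In that configuration two things go wrong simultaneously: (i) $M_i^w(t_i)\neq 0$ (the B-spline jumps at the left end of its support --- in the polynomial case $M_0\propto(t_{i+k}-x)^{k-1}$), so your integrated identity is missing a boundary term; and (ii) $M_i^{\bar w}$ degenerates: since $y\mapsto h_{k-1}^{\bar w}(x,y)$ lies in $\lin\{u_1^*,\ldots,u_{k-1}^*\}$, the divided difference at $k$ coincident points annihilates it, so $M_i^{\bar w}\equiv 0$ for $x>t_i$ (for polynomials, a $k$-fold divided difference kills a degree-$(k-2)$ function). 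Hence near $t_i$ the integrand equals $-M_{i+1}^{\bar w}\leq 0$, and your argument, taken literally, would output $h_i^w<0$ --- the wrong sign. The polynomial example $M_0$ makes this concrete: it decreases from a positive value at $a$, whereas your reasoning presumes it increases from $0$.

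The paper avoids this by working at the \emph{right} end: for $x\in(t_{i+k-1},t_{i+k})$ it uses $\int_{t_i}^x M_i^{\bar w}=1$ to write $M_i(x)/w_1(x)=h_i^{-w}\bigl(1-\int_{t_i}^x M_{i+1}^{\bar w}\bigr)$, and it treats the degenerate configuration $t_{i+k-1}=t_{i+k}$ by a separate continuity argument on an explicit determinant function $g(t)$, deducing the sign of $D^\mu g(t_{i+k})$ from the nondegenerate case; you have no analogous fallback for your degenerate configuration, so your proof needs a separate argument there (e.g.\ the corrected identity with the boundary term, letting $x\uparrow t_{i+k}$, or the paper's determinant device). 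Two lesser points: for intermediate multiplicities $t_i=\cdots=t_{i+m}$ with $1\leq m\leq k-2$, your comparison of vanishing orders requires that a Chebyshev B-spline vanishes at a left knot of multiplicity $\mu$ to order \emph{exactly} $k-1-\mu$ with positive leading one-sided derivative; this is true and provable by the Taylor-plus-zero-count argument the paper uses for $\phi_{i,r}$ in Section 2.3 (or citable from \cite{Schumaker1981}), but you assert it without justification. Your upper bound via a maximizer $x^*$ of $M_i^w$ together with $\max M_i^w\geq(t_{i+k}-t_i)^{-1}$ is a correct and slightly slicker alternative to the paper's (which integrates the identity over the whole support against $\int M_i^w=1$), but it again invokes $M_i^w(t_i)=0$ and so inherits the same degenerate-case caveat.
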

\begin{proof}

	We use the definitions of $M_i^w$ and of $g_k(x,y)$ to get
\begin{align*}
	M_i^w(x) &= (-1)^k [t_i,\ldots,t_{i+k}]_{u^*(w)} g_k^w (x,y) \\
	&=(-1)^k [t_i,\ldots,t_{i+k}]_{u^*(w)} \Big(\charfun_{x\geq y} (x,y)
	\big(\prod_{\ell=1}^{k-1} m_{w_\ell} T_y\big)w_k(x) \Big),
\end{align*}
where the divided difference is applied to the $y$-variable. If we divide by
$w_1$ and differentiate, we obtain
\begin{align*}
	D_1 M_i^w(x) &= (-1)^k [t_i,\ldots,t_{i+k}]_{u^*(w)} \Big( \charfun_{x\geq y}
	(x,y) \big(\prod_{\ell=2}^{k-1} m_{w_\ell} T_y\big) w_k(x) \Big)\\
	&= (-1)^k[t_i,\ldots,t_{i+k}]_{u^*(w)} g_{k-1}^{\bar w}(x,y).
\end{align*}
Now, we can use the recursion formula for divided differences to get for $\bar
u^*(w) = (u_1^*,\ldots,u_k^*)$
\begin{equation*}
	D_1 M_i^w (x)= (-1)^k \frac{[t_{i+1},\ldots,t_{i+k}]_{\bar
	u^*(w)}-[t_{i},\ldots,t_{i+k-1}]_{\bar u^*(w)}}{[t_{i+1},\ldots,t_{i+k}]_{\bar
	u^*(w)}u_{k+1}^*-[t_{i},\ldots,t_{i+k-1}]_{\bar u^*(w)}u_{k+1}^*}
	g_{k-1}^{\bar w} (x,y).
\end{equation*}
Since $u^*(\bar w) = \bar u^*(w)$ we obtain the desired formula for $D_1M_i^w$
by using the definition of $M_i^{\bar w}$.

 Next, we show $h_i^w>0$ and first consider the case
	$t_{i+k} > t_{i+k-1}$. Here, for $x\in (t_{i+k-1}, t_{i+k})$ we have by the
above formula
\[
	\frac{M_i(x)}{w_1(x)} = \frac{1}{h_i^w} \int_{t_i}^x \left(M_i^{\bar w}(s) -
	M_{i+1}^{\bar w}(s)\right)\dif s = \frac{1}{h_i^w}\Big( 1 - \int_{t_i}^x
	M_{i+1}^{\bar w}(s)\dif s\Big).
\]
Since the left hand side is positive and the term in the   brackets on the
right hand side is positive, we conclude that $h_i^w$ is positive  as well.

 It remains to consider $t_{i+k} = t_{i+k-1}$, in which case we define the
function
\begin{equation*}
	\begin{aligned}
	g(t) = 
	D&\begin{pmatrix} 
		t_{i+1} & \cdots & t_{i+k-1} & t \\
		u_1^* & \cdots & u_{k-1}^* & u_{k+1}^* \\			       
		\end{pmatrix}\cdot
	D\begin{pmatrix}
		t_{i} & \cdots & t_{i+k-1} \\
		 u_1^* & \cdots & u_{k}^* \\
		  \end{pmatrix} \\
	&- D\begin{pmatrix} 
		t_{i} & \cdots & t_{i+k-2} & t_{i+k-1}  \\
		u_1^* & \cdots & u_{k-1}^* & u_{k+1}^* \\
		\end{pmatrix}\cdot
	D\begin{pmatrix}
		t_{i+1} & \cdots & t_{i+k-1} & t\\
		 u_1^* & \cdots & u_{k-1}^* &  u_{k}^* \\
		 \end{pmatrix}
\end{aligned}
\end{equation*}
for $t\notin \{t_i,\ldots, t_{i+k}\}$ and extend it smoothly to $\{t_i,\ldots,
	t_{i+k}\}$. 
Then, all the zeros of $g$ (including multiplicities) are 
the points $\{t_i,\ldots,t_{i+k-1} \}$ and in particular $D^j g(t_{i+k}) = 0$
for $j=0,\ldots,\mu-1$ where $\mu$ denotes the multiplicity of $t_{i+k}$ in the
set $\{t_i,\ldots, t_{i+k-1}\}$.
By the definition of $h_{i}^w$ and  \eqref{eq: ECT>0},  the sign of $h_{i}^w$ is the same as the sign
of $D^\mu g(t_{i+k})$. Since $g(t_{i+k} + \varepsilon)>0$ by the case
$t_{i+k}>t_{i+k-1}$, we also deduce $D^\mu g(t_{i+k})>0$.

The upper estimate for $h_i^w$ now follows from
\begin{equation*}
	h_i^w	 = \int_{t_i}^{t_{i+k}} w_1(x) \Big(\int_{t_i}^x M_i^{\bar
	w}(s)\dif s -
	\int_{t_i}^x
	M_{i+1}^{\bar w}(s)\dif s\Big) \dif x,
\end{equation*}

where we  have used the normalization $\int M_i^w = 1$.
\end{proof}

	 Let us denote by $M_j^{(\ell)}$ the $j$th B-spline corresponding to the
	weights $(w_{k-\ell+1},\ldots,w_k)$ and denote by $h_j^{(\ell)}$ the
	corresponding factor in formula \eqref{eq:M deriv}, i.e. formula
	\eqref{eq:M deriv} reads
	\begin{equation}
		\label{eq:M deriv 1}
		D_1 M_j^{(\ell)} = \frac{M_j^{(\ell-1)} -
		M_{j+1}^{(\ell-1)}}{h_j^{(\ell)}}.
	\end{equation}

\begin{lem}\label{lem: deriv M est}
	We have $\sgn D_{k-1}\cdots D_1 M_j^{(k)}(t)=(-1)^i$ for $t\in
	(t_{j+i},t_{j+i+1})$ with $i=0,\ldots,k-1$, and  for any $\ell = 0,\ldots k-1$ and
	$t\in[t_j,t_{j+k}]\backslash\{t_j,\ldots,t_{j+k}\}$,
	\begin{equation*}
		|D_{k-1}\cdots D_1 M_j^{(k)}(t)| \gtrsim (t_{j+k}-t_j)^{-k}
		\cdot\max\big(1, (t_{j+k}-t_j)^{\ell+1} |D_\ell \cdots D_1
		M_j^{(k)}(t)|\big).
	\end{equation*}
%
\end{lem}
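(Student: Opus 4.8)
The plan is to iterate the derivative recursion \eqref{eq:M deriv 1} to expand each intermediate derivative $L_p M_j^{(k)} = D_p\cdots D_1 M_j^{(k)}$ as a signed combination of lower-order B-splines, and then to read off both the sign pattern and the size estimates from its coefficients. Applying $D_p$ steps a B-spline of order $k-p+1$ down to order $k-p$ via $D_p M_m^{(k-p+1)} = (M_m^{(k-p)} - M_{m+1}^{(k-p)})/h_m^{(k-p+1)}$, so induction on $p$ gives
\begin{equation*}
	L_p M_j^{(k)} = \sum_{r=0}^{p} c_{p,r}\, M_{j+r}^{(k-p)},\qquad
	c_{p,r} = \frac{c_{p-1,r}}{h_{j+r}^{(k-p+1)}} - \frac{c_{p-1,r-1}}{h_{j+r-1}^{(k-p+1)}},
\end{equation*}
starting from $c_{0,0}=1$. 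The crucial structural observation, and the point on which the whole argument turns, is that the coefficients are sign-definite: $\sgn c_{p,r} = (-1)^r$ and $c_{p,r}\neq 0$. This follows by induction, since the two summands defining $c_{p,r}$ carry signs $(-1)^r$ and $-(-1)^{r-1}=(-1)^r$ while $h_m^{(\ell)}>0$ by \eqref{eq:est_h}; in particular there is no cancellation, so $|c_{p,r}| = |c_{p-1,r}|/h_{j+r}^{(k-p+1)} + |c_{p-1,r-1}|/h_{j+r-1}^{(k-p+1)}$.

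The sign statement is then immediate: for $t\in(t_{j+i},t_{j+i+1})$ only the single term $M_{j+i}^{(1)}$ of $L_{k-1}M_j^{(k)} = \sum_r c_{k-1,r}M_{j+r}^{(1)}$ is nonzero, each $M_{j+r}^{(1)}$ being supported on $(t_{j+r},t_{j+r+1})$ and a positive multiple of $w_k$ there, whence $\sgn L_{k-1}M_j^{(k)}(t) = \sgn c_{k-1,i} = (-1)^i$. For the lower bound I exploit sign-definiteness twice. First, each monomial in $c_{k-1,i}$ is a product of $k-1$ factors $1/h_m^{(\ell)}$ with $h_m^{(\ell)}\lesssim t_{m+\ell}-t_m \leq t_{j+k}-t_j$, so each factor is $\gtrsim (t_{j+k}-t_j)^{-1}$ and hence $|c_{k-1,i}|\gtrsim (t_{j+k}-t_j)^{-(k-1)}$. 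Combined with $M_{j+i}^{(1)}(t)\simeq (t_{j+i+1}-t_{j+i})^{-1}\geq (t_{j+k}-t_j)^{-1}$, this yields $|L_{k-1}M_j^{(k)}(t)|\gtrsim (t_{j+k}-t_j)^{-k}$, the $1$-term in the maximum.

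For the second term I first establish the coefficient comparison $|c_{\ell,r}|\lesssim (t_{j+k}-t_j)^{k-1-\ell}|c_{k-1,i}|$ for every $r$ in the range $\{i-k+\ell+1,\ldots,i\}$ relevant on $(t_{j+i},t_{j+i+1})$: from $|c_{p,r}|\geq |c_{p-1,r'}|/h \gtrsim |c_{p-1,r'}|(t_{j+k}-t_j)^{-1}$ (with $r'\in\{r-1,r\}$) one follows any monotone index path from level $\ell$ to level $k-1$ through $k-1-\ell$ steps. I then bound $|L_\ell M_j^{(k)}(t)| \le \sum_r |c_{\ell,r}| M_{j+r}^{(k-\ell)}(t)$, using the Markov inequality \eqref{eq:markov} with $p=\infty,q=1$ on $(t_{j+i},t_{j+i+1})$ together with the normalization $\int M_{j+r}^{(k-\ell)}=1$ to get $M_{j+r}^{(k-\ell)}(t)\lesssim (t_{j+i+1}-t_{j+i})^{-1}$; since at most $k-\ell$ of these B-splines are nonzero at $t$, we obtain $\sum_r M_{j+r}^{(k-\ell)}(t) \lesssim (t_{j+i+1}-t_{j+i})^{-1}\simeq M_{j+i}^{(1)}(t)$. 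Inserting the coefficient comparison gives $|L_\ell M_j^{(k)}(t)| \lesssim (t_{j+k}-t_j)^{k-1-\ell}|c_{k-1,i}|M_{j+i}^{(1)}(t) = (t_{j+k}-t_j)^{k-1-\ell}|L_{k-1}M_j^{(k)}(t)|$, which is exactly the second term in the maximum. I expect the main obstacle to be the sign-definiteness together with this reverse estimate controlling the \emph{lower}-order derivative by the \emph{top}-order one: it runs against the usual direction of Markov-type inequalities and succeeds only because the absence of cancellation lets every factor $1/h$ appearing in $c_{\ell,r}$ be charged to a larger collection of the same factors inside $c_{k-1,i}$.
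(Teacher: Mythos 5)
Your proposal is correct and follows essentially the same route as the paper: the same expansion $D_\ell\cdots D_1 M_j^{(k)}=\sum_r \beta_r^{(\ell)}M_{j+r}^{(k-\ell)}$ via the recursion \eqref{eq:M deriv 1}, the same key observation that the coefficients alternate in sign so that no cancellation occurs (giving both $\sgn\beta_m^{(\ell)}=(-1)^{m-j}$ and the comparison \eqref{eq:beta} between level-$\ell$ and level-$(k-1)$ coefficients via $1/h\gtrsim (t_{j+k}-t_j)^{-1}$), and the same assembly of the two terms in the maximum. The only cosmetic difference is that you obtain the pointwise bound $M_r^{(k-\ell)}(t)\lesssim |I|^{-1}$ from Markov's inequality \eqref{eq:markov} plus the $L^1$-normalization, whereas the paper deduces it from the partition of unity \eqref{eq:partunity} together with \eqref{eq:alphasupp} and \eqref{eq:defN}.
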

\begin{proof}
	Let us first observe that by \eqref{eq:M deriv 1}, for $\ell = 0,\ldots, k-1$, we can write $D_\ell\cdots
D_1M_j^{(k)}$ in terms of the functions $(M_m^{(k-\ell)})$ as
\[
	D_\ell \cdots D_1 M_j^{(k)}(t) = \sum_{m} \beta_m^{(\ell)}
	M_m^{(k-\ell)}(t),
\]
where the coefficients $\beta_m^{(\ell)}$ are given by the following recursion
formulas:
\begin{equation}\label{eq:rec_coeff}
	\begin{aligned}
		\beta_j^{(0)}&=1,\qquad
		\beta_{m}^{(\ell)}=0 \qquad \text{for }m\notin \{j,\ldots,j+\ell\} \\
		\beta_m^{(\ell)} &=
		\frac{\beta_m^{(\ell-1)}}{h_m^{(k-\ell+1)}} -
		\frac{\beta_{m-1}^{(\ell-1)}}{h_{m-1}^{(k-\ell+1)}},\qquad
		m=j,\ldots,j+\ell. 
	\end{aligned}
\end{equation}
Using \eqref{eq:est_h}, this implies in particular that $\sgn \beta_{m}^{(\ell)} = (-1)^{m-j}$ and 
\begin{equation}
	\label{eq:beta}
	|\beta_m^{(k-1)}|\gtrsim (t_{j+k}-t_j)^{-(k-\ell)+1} \sum_{r=m-(k-\ell)+1}^m
|\beta_r^{(\ell)}|\qquad \text{for }m=j,\ldots,j+\ell.
\end{equation}

Taking a point $t\in (t_m,t_{m+1})$ for $m= j,\ldots,j+k-1$ and choosing $\ell = 0$ in formula \eqref{eq:beta}, yields 
\begin{align*}
	|D_{k-1} \cdots D_1 M_j^{(k)}(t)| &= \Big|\sum_{r}
	\beta_r^{(k-1)} \cdot M_r^{(1)}(t)\Big|  = |\beta_m^{(k-1)}|\cdot
	M_m^{(1)}(t)\\
	&\gtrsim (t_{j+k}-t_j)^{-k+1}
		\sum_{r=m-k+1}^m |\beta_r^{(0)}| (t_{m+1}-t_m)^{-1} \\
	&\gtrsim (t_{j+k}-t_j)^{-k},
\end{align*}
which shows the first part of the desired inequality.

On the other hand,
we can compute for $t\in (t_m,t_{m+1})$,  by the support property $\supp
	M_r^{(k-\ell)} = [t_r, t_{r+k-\ell}]$ of the B-spline functions,
\begin{align*}
	|D_\ell\cdots D_1 M_j^{(k)}(t)| = \Big| \sum_{r=m-(k-\ell)+1}^m \beta_r^{(\ell)}
	M_r^{(k-\ell)}(t)\Big|.
\end{align*}
As a consequence of \eqref{eq:alphasupp},\eqref{eq:defN},\eqref{eq:partunity},
we get that $|M_r^{(k-\ell)}(t)| \lesssim (t_{r+k-\ell} - t_r)^{-1}$, which,
together with \eqref{eq:beta},
implies
\begin{align*}
	|D_\ell\cdots D_1 M_j^{(k)}(t)| &\lesssim (t_{m+1} -
	t_m)^{-1}\sum_{r=m-(k-\ell)+1}^m |\beta_r^{(\ell)}| \\
	&\lesssim (t_{m+1} - t_m)^{-1}(t_{j+k}-t_j)^{k-\ell-1} |\beta_m^{(k-1)}| \\
	&\lesssim (t_{j+k}-t_j)^{k-\ell-1} |D_{k-1}\cdots D_1 M_j^{(k)}(t)|,
\end{align*}
which  also shows the second part of the desired inequality.
\end{proof}

\subsection{Definition and Properties of the Chebyshev spline $\phi$}
Let  $\Delta= \{a = t_{-k+1}=\cdots =t_{k-1}<t_k<\cdots< t_{n}< t_{n+1} = \cdots =
t_{n+2k-1}=b\}$ and  $\sigma$ be the Chebyshev spline  on the grid $\Delta$ so
that in
the interior of each grid interval,
\begin{equation}\label{eq:defsigma}
	\sigma \in \ker D_kD_{k-1}\cdots D_2 D_2\cdots D_{k}D_{k+1},
\end{equation}
(where   $D_j = D\circ m_{1/w_j}$ and $D_{k+1}=D$)
with the properties
\begin{enumerate}
	\item \label{it:sigma-a}$D_3\cdots D_{k}D_{k+1}\sigma(a) = 1$,\qquad
	\item $D_i\cdots D_{k}D_{k+1}\sigma(a) =D_i\cdots D_{k}D_{k+1}\sigma(b) =  0$ for any $i=4,\ldots, k+1$.
	\item $\sigma(t_j) = 0$ for any $j=k-1,\ldots n+1$.
\end{enumerate}
 By Hermite interpolation \eqref{eq:hermite} such a function $\sigma$ exists and is
uniquely determined.
 Indeed, denote by $(M_j^{(2k-1)})_{j=-k+1}^n$ the B-splines with respect
to the partition $\Delta$ corresponding to the
differential operator in \eqref{eq:defsigma}. Then, conditions (2)  and
$\sigma(a)=\sigma(b)=0$ for $\sigma=\sum_{j=-k+1}^n a_j
M_j^{(2k-1)}$ amount to solving two triangular systems resulting in  $a_{-k+1} = \cdots = a_{-2} = 0$ and $a_n=
\cdots =
a_{n-k+3}=0$, since $D_i\cdots D_{k+1} M_j^{(2k-1)}(a)$ is only non-zero for
$j=-k+1,\ldots, -i+2$ and $D_i\cdots D_{k+1} M_j^{(2k-1)}(b)$ is only non-zero for 
$j=n+i-k-1,\ldots,n$.
Next, we choose $a_{-1}$ so that $D_3\cdots D_k D_{k+1}
\sigma(a)=1$.
Then, we can use standard Hermite interpolation \eqref{eq:hermite} to obtain
uniquely determined
coefficients
$(a_{j})_{j=0}^{n-k+2}$ satisfying $\sigma(t_k)= \cdots= \sigma(t_n)=0$.

%

Next, we define  the operators
$$S_i:=\begin{cases}
D_{k+2-i}\cdots D_{k+1}, & { \rm  if }\ i\leq k\\
D_{i-k+1}\cdots D_2 D_2\cdots D_{k+1},& { \rm  if }\ k+1\leq i\leq 2k-1
\end{cases}$$
 and the function $\phi$ by using $\sigma$ and
the  operators $S_i$ as follows:
$$\phi := \frac{w_1}{w_2}S_{k-1} \sigma.$$ Note that $D_k\cdots D_1\phi=D_k\cdots D_2D_2 S_{k-1} \sigma=0$
 on each interval $(t_i,t_{i+1})$.

We now observe that $\sigma$ has $n+k-1$ zeros and this is the
highest number of zeros a nonzero spline of this order can have  (including
multiplicities), so we conclude that  $\sgn \sigma= (-1)^{j-k+1}$ on
$(t_j,t_{j+1})$ for all indices $j$, where we also used the fact that $\sigma$
is positive on $(t_{k-1},t_k)$ by conditions (1), (2) and (3). 

\begin{lem}\label{lem:sigma k}
	If  $|\Delta| \leq 1$, then	
	 
	\begin{equation*}
		\int_{t_j}^{t_{j+1}} |\sigma(t)|\dif t \gtrsim  (t_{j+1} -
		t_j)^{k},\qquad j=k-1,\ldots,n.
	\end{equation*}
\end{lem}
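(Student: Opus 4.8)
The plan is to separate the estimate into a soft reduction that I am confident about and a hard quantitative core, which I will isolate as the main obstacle. Throughout write $I_j=[t_j,t_{j+1}]$ and $\rho:=S_{k-1}\sigma=D_3\cdots D_{k+1}\sigma$, so that $\phi=(w_1/w_2)\rho$ and $\rho(a)=1$ by condition (1). First I would invoke the Markov inequality \eqref{eq:markov}, which holds for the ECT system of order $2k-1$ generated by the (repeated) weights occurring in \eqref{eq:defsigma} exactly as it does for $\mathcal U_k$, the admissible constants still depending only on $\min w_i$ and the derivatives of $w_1,\dots,w_k$. Since $\sigma|_{I_j}$ lies in that order-$(2k-1)$ space, the choice $p=\infty$, $q=1$, $j=0$ gives $\|\sigma\|_{L^\infty(I_j)}\lesssim |I_j|^{-1}\int_{I_j}|\sigma|$, hence $\int_{I_j}|\sigma|\gtrsim |I_j|\,\|\sigma\|_{L^\infty(I_j)}$. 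Thus it suffices to prove the pointwise lower bound
\[
	\|\sigma\|_{L^\infty(I_j)}\gtrsim |I_j|^{k-1},\qquad j=k-1,\dots,n.
\]

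Next I would record the zero structure of $\sigma$ through the Chebyshev differentiation operators $\widetilde L_i$ of the order-$(2k-1)$ system in \eqref{eq:defsigma}, for which $\widetilde L_{k-1}\sigma=\rho$. Conditions (1)--(3) translate into $\widetilde L_0\sigma(a)=\dots=\widetilde L_{k-2}\sigma(a)=0$ and $\widetilde L_{k-1}\sigma(a)=\rho(a)=1$, the analogous vanishing of $\widetilde L_0\sigma,\dots,\widetilde L_{k-2}\sigma$ at $b$, and the vanishing of $\sigma$ at the interior knots. Counting multiplicities, these account for exactly the maximal number $n+k-1$ of zeros of a nonzero spline of this order, so every interior-knot zero is simple and $\sigma$ strictly alternates with $\sgn\sigma=(-1)^{j-k+1}$ on $I_j$. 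To pass from the sup-norm bound to a statement about $\rho$, I would use the positive iterated-integral (Green's function) representation $\sigma(x)=\int_a^x G(x,s)\,\rho(s)\dif s$ obtained by inverting $D_3\cdots D_{k+1}$ (all integration constants vanish by the order-$(k-1)$ zero at $a$), together with the extremal property of Chebyshev systems. For $k=2$ this reduction is exact: $\rho=\sigma'$ has a single zero $\xi_j$ in $I_j$, and $\sigma(t_j)=\sigma(t_{j+1})=0$ forces $\|\sigma\|_{L^\infty(I_j)}=\tfrac12\int_{I_j}|\rho|$, so the claim becomes $\int_{I_j}|\rho|\gtrsim |I_j|$. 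In general this scheme reduces the sup-norm bound to a uniform lower bound on the relevant Chebyshev derivative of $\sigma$ at the knots, equivalently on the B-spline coefficients of $\sigma$, equivalently on $\phi$ near $I_j$.

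The main obstacle is precisely this last lower bound: showing that the single boundary normalization $\rho(a)=1$ forces $\rho$ (equivalently $\phi$, equivalently the appropriate $\widetilde L_i\sigma$) to stay bounded away from $0$ near every interval, uniformly in the grid, even when intervals far from $a$ are arbitrarily small. To establish it I would expand $\sigma=\sum_j a_j M_j^{(2k-1)}$ and propagate from $a$: the strict sign-alternation fixes the signs of the coefficients $a_j$, the total positivity of the B-spline evaluation and Gram kernels recorded in Section~\ref{sec:chebsplines} prevents cancellation, and the interplay of Chebyshev derivatives of different orders is controlled through the recursion and estimates of Lemma~\ref{lem: deriv M est} and the Vandermonde comparison \eqref{eq:vandermonde}, with the first interval $[a,t_k]$ supplying the base of the propagation via $\widetilde L_{k-1}\sigma(a)=1$. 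Combining the resulting coefficient lower bound with the Green representation yields $\|\sigma\|_{L^\infty(I_j)}\gtrsim |I_j|^{k-1}$, and then the Markov reduction of the first paragraph gives the stated estimate.
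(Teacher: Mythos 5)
Your first paragraph (Markov with $p=\infty$, $q=1$ to reduce the lemma to $\|\sigma\|_{L^\infty(I_j)}\gtrsim|I_j|^{k-1}$) is sound, and your reading of the boundary and knot conditions is correct. But the proof has a genuine gap, and you have named it yourself: the entire quantitative content of the lemma is the claim that the single normalization $S_{k-1}\sigma(a)=1$ forces a lower bound on $\sigma$ \emph{on every grid interval}, uniformly in the partition, and your proposal does not prove it. The sketched mechanism --- expand $\sigma=\sum_j a_jM_j^{(2k-1)}$, fix coefficient signs by alternation, and ``propagate'' from $a$ using total positivity and Lemma~\ref{lem: deriv M est} --- is not an argument: total positivity and sign alternation give sign information, not magnitude lower bounds, and nothing in them prevents $|a_j|$ from decaying as $j$ moves away from the boundary. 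Worse, any interval-by-interval propagation scheme of the kind you describe would typically lose a multiplicative constant at each knot, producing a bound that degrades with $n$; the lemma needs a constant independent of the number of knots, so this route does not merely have a missing detail, it points in a direction that cannot deliver the uniformity.

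The paper's proof supplies exactly the missing idea: a conserved quantity. It sets
\begin{equation*}
	H = \Big(\frac{S_{k-1}\sigma}{w_2}\Big)^2 + 2\sum_{q=1}^{k-1}(-1)^q\,\frac{S_{k-1-q}\sigma}{w_{q+2}}\cdot\frac{S_{k-1+q}\sigma}{w_{q+1}},
\end{equation*}
and checks two things: $H'$ telescopes to $(-1)^{k-1}\frac{\sigma}{w_{k+1}}S_{2k-1}\sigma=0$ on each open grid interval, because $\sigma$ solves $S_{2k-1}\sigma=0$ there; and $H$ is continuous across the knots, because the only term containing the possibly discontinuous $S_{2k-2}\sigma$ is multiplied by $\sigma$, which vanishes at every knot by condition (3). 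Hence $H\equiv H(a)=w_2(a)^{-2}>0$ on all of $[a,b]$ --- this is precisely the grid-independent propagation of the boundary normalization that your plan lacks. The conclusion is then immediate from the same Markov inequality you invoked, applied to each factor: $\|S_r\sigma\|_{L^\infty(I_j)}\lesssim|I_j|^{-r-1}\int_{I_j}|\sigma|$ for $r=0,\ldots,2k-2$, so every product in $H$ is $\lesssim|I_j|^{-2k}\big(\int_{I_j}|\sigma|\big)^2$, giving $w_2(a)^{-2}\lesssim|I_j|^{-2k}\big(\int_{I_j}|\sigma|\big)^2$ and thus $\int_{I_j}|\sigma|\gtrsim|I_j|^k$ directly --- note also that the paper never needs your intermediate sup-norm step, nor the Green's-function representation, nor any B-spline coefficient analysis.
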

\begin{proof}
We introduce the function
\begin{equation*}
	H = \left(\frac{S_{k-1}\sigma}{w_2}\right)^2 + 2\sum_{q=1}^{k-1}(-1)^q\frac{S_{k-1-q}\sigma}{ w_{q+2}}\cdot \frac{S_{k-1+q}\sigma}{w_{q+1}}.
\end{equation*}
Since all the functions $\sigma,S_1\sigma,\ldots, S_{2k-2}\sigma$ are
continuous at the grid points $t_j$ and
$\sigma(t_j)=0$, the function $H$ is continuous at the grid points. Moreover, $H$ is differentiable in the
interior $(t_j,t_{j+1})$ of the grid intervals. This
yields
\begin{align*}
	H'& = 2\frac{S_{k-1}\sigma}{w_2}S_{k}\sigma + 2\sum_{q=1}^{k-1}(-1)^q\left(S_{k-q}\sigma\cdot \frac{S_{k+q-1}\sigma}{w_{q+1}} + \frac{S_{k-q-1}\sigma}{ w_{q+2}}\cdot {S_{k+q}\sigma}  \right)\\
	&=2\sum_{q=2}^{k-1}(-1)^q S_{k-q}\sigma\cdot
	\frac{S_{k+q-1}\sigma}{w_{q+1}}+ 2\sum_{q=1}^{k-2}(-1)^q
	\frac{S_{k-q-1}\sigma}{ w_{q+2}}\cdot {S_{k+q}\sigma} \\
	 &\qquad+(-1)^{k-1} \frac{\sigma}{ w_{k+1}}\cdot {S_{2k-1}\sigma}	
	=(-1)^{k-1} \frac{\sigma}{ w_{k+1}}\cdot {S_{2k-1}\sigma}=0,
\end{align*}
where the last equality holds due to the condition  $\sigma\in\ker
D_{k}\cdots D_2 D_2\cdots D_{k+1}$. So we deduce that $H$ is constant on
all of the interval $[a,b]$ and $H(a)= 1/w_2^2(a)$.

By Markov's inequality \eqref{eq:markov}, we have that for all $r=0,\ldots, 2k-2$ and
$I_j=(t_j,t_{j+1})$,  
\[
	\|S_r \sigma\|_{L^\infty(I_j)} \lesssim |I_j|^{-r-1}\int_{I_j} |\sigma(t)|\dif t.
\]
 Thus, for all $t\in(t_j,t_{j+1})$ and all $j$,
\begin{equation*}
	\frac{1}{w_2(a)^2} = H(t) \lesssim |I_j|^{-2k} 
	\Big(\int_{I_j} |\sigma(t)|\dif t\Big)^2,
\end{equation*}
which is the conclusion.
\end{proof}

Now we are ready to prove the two desired properties of the Chebyshev spline $\phi$.
\begin{thm}\label{thm: phi_2}
	There exist two numbers $\varepsilon,c>0$ depending only on the weights
	$w_i$ so that if  $|\Delta| \leq \varepsilon$, we have  
		\begin{enumerate}
		\item $\sign\langle\phi,M_j\rangle$ is alternating,
		\item $|\langle \phi, M_j\rangle| \geq c$.
	\end{enumerate}
\end{thm}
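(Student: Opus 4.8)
The plan is to reduce $\langle \phi, M_j\rangle$ to an integral of $\sigma$ against a differentiated B-spline whose sign and size are understood, and then to read off both assertions from the known sign pattern of $\sigma$ together with Lemma \ref{lem:sigma k}. Since $\phi=\frac{w_1}{w_2}S_{k-1}\sigma$ with $S_{k-1}=D_3\cdots D_{k+1}$, I would start from
\[
	\langle\phi,M_j\rangle=\int_a^b (D_3\cdots D_{k+1}\sigma)\,\frac{w_1 M_j}{w_2}\dif x
\]
and integrate by parts $k-1$ times, using that $\int (D_i f)g=[\,fg/w_i\,]-\int f\,(D_i^* g)$ (from $D_if=(f/w_i)'$ and $D_i^* g=g'/w_i$), thereby moving all operators onto the B-spline factor. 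The boundary terms at the support endpoints $t_j,t_{j+k}$ vanish because $M_j$ and its derivatives up to order $k-2$ vanish there, and no interior-knot contributions appear since the intermediate functions produced on the $M_j$-side stay $C^{k-2}$. This yields the clean identity
\[
	\langle\phi,M_j\rangle=(-1)^{k-1}\int_{t_j}^{t_{j+k}}\sigma(x)\,\Psi_j(x)\dif x,\qquad \Psi_j=D_{k+1}^*\cdots D_3^*\Big(\tfrac{w_1 M_j}{w_2}\Big),
\]
where $\Psi_j$ is supported on $[t_j,t_{j+k}]$ and, on each grid interval, lies in the one-dimensional null space of a first-order operator (in the polynomial case $w_i\equiv 1$ this is exactly $\Psi_j=D^{k-1}M_j$).

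Next I would establish the sign and size of $\Psi_j$: on each $(t_{j+i},t_{j+i+1})$ it has constant sign equal to $(-1)^i$ times a global sign, and $|\Psi_j|\gtrsim (t_{j+k}-t_j)^{-k}$ throughout the support. This is the dual analogue of Lemma \ref{lem: deriv M est}: one expands $\Psi_j$ in lower-order B-splines via the dual counterpart of the reduction formula \eqref{eq:M deriv} from Lemma \ref{lem:M deriv}, the expansion coefficients obey a recursion of the type \eqref{eq:rec_coeff}, positivity of the associated factors forces the coefficient signs to alternate, and the comparison with Vandermonde determinants \eqref{eq:vandermonde} furnishes the lower bound. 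I expect this to be the main obstacle, because the dual operators $D_i^*$, the weight factor $w_1/w_2$, and the index shift prevent a direct appeal to Lemma \ref{lem: deriv M est}; one genuinely has to re-run that argument for the dual ECT system $u^*(w)$ and verify that the weight factor does not destroy the alternation.

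Granting this, the sign alternation is immediate. Recall $\sgn\sigma=(-1)^{m-k+1}$ on $(t_m,t_{m+1})$, so on $(t_{j+i},t_{j+i+1})$ the integrand $\sigma\Psi_j$ has sign $(-1)^{(j+i)-k+1}\cdot(-1)^i=(-1)^{j-k+1}$, independent of $i$. Hence every subinterval contributes with the same sign, the integral has sign $(-1)^{j-k+1}$ up to the fixed global factor, and therefore $\sgn\langle\phi,M_j\rangle=(-1)^{k-1}(-1)^{j-k+1}(\pm1)=(-1)^{j}(\pm1)$ is alternating in $j$, proving (1).

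Finally, because there is no cancellation, $|\langle\phi,M_j\rangle|=\int_{t_j}^{t_{j+k}}|\sigma|\,|\Psi_j|\dif x$. Selecting the largest subinterval $I^*\subset[t_j,t_{j+k}]$, we have $|I^*|\ge (t_{j+k}-t_j)/k$, so Lemma \ref{lem:sigma k} gives $\int_{I^*}|\sigma|\gtrsim |I^*|^k\gtrsim (t_{j+k}-t_j)^k$, while $|\Psi_j|\gtrsim (t_{j+k}-t_j)^{-k}$ on $I^*$; multiplying yields $|\langle\phi,M_j\rangle|\gtrsim 1=:c$, which is (2). The smallness assumption $|\Delta|\le\varepsilon$ is what makes all the preliminary estimates available (Lemma \ref{lem:sigma k}, the Markov inequality \eqref{eq:markov}, and the B-spline size bounds) and lets the contribution of the largest subinterval dominate the error terms coming from the non-constancy of the weights; by the symmetry between $a$ and $b$ the same $\varepsilon$ and $c$ serve both endpoints.
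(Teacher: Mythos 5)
Your overall architecture coincides with the paper's: $k-1$ integrations by parts do yield $\langle\phi,M_j\rangle=(-1)^{k-1}\int_{t_j}^{t_{j+k}}\sigma\,u\dif t$ with $u=\Psi_j=D_k\cdots D_2(w_1M_j)$ (your dual-operator expression is this same function in disguise), and your endgame --- constant sign of $\sigma\,u$ on all of $[t_j,t_{j+k}]$, restriction to the largest knot interval, Lemma \ref{lem:sigma k} against the lower bound $\gtrsim (t_{j+k}-t_j)^{-k}$ --- is exactly the paper's. One slip in the reduction: for boundary splines ($j$ near $0$ or $n$) the knots at $a$ and $b$ have multiplicity up to $k$, so $M_j$ does \emph{not} vanish to order $k-2$ at its support endpoints; there the boundary terms die because of the conditions imposed on $\sigma$, namely $D_i\cdots D_{k+1}\sigma(a)=D_i\cdots D_{k+1}\sigma(b)=0$ for $i=4,\ldots,k+1$ and $\sigma(a)=\sigma(b)=0$, not because of $M_j$.

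The genuine gap is the step you yourself flag as the main obstacle: the sign alternation $\sgn\Psi_j=(-1)^i$ on $(t_{j+i},t_{j+i+1})$ and the bound $|\Psi_j|\gtrsim(t_{j+k}-t_j)^{-k}$ are asserted, not proven, and your proposed route (re-running the recursion \eqref{eq:rec_coeff} for the dual ECT system) is both unnecessary and, as stated for arbitrary $\Delta$, doubtful: the weight factor $w_1/w_2$ and the commutators it generates produce lower-order terms with no sign control, so an exact analogue of the positive-factor recursion for $\Psi_j$ itself is not available. The paper sidesteps this perturbatively: expand by Leibniz $u=\sum_{\ell=0}^{k-1}\widetilde w_\ell\,D_\ell\cdots D_1 M_j$, where the $\widetilde w_\ell$ are bounded and the leading coefficient $\widetilde w_{k-1}=w_1^2/w_k$ is bounded away from zero; then the second inequality of Lemma \ref{lem: deriv M est}, i.e. $|D_\ell\cdots D_1M_j|\lesssim (t_{j+k}-t_j)^{k-1-\ell}|D_{k-1}\cdots D_1M_j|$ for $\ell\le k-2$, makes every lower-order term $O(\varepsilon)$ relative to the top one once $|\Delta|\le\varepsilon$, whence $\sgn u=\sgn D_{k-1}\cdots D_1M_j=(-1)^i$ and $|u|\gtrsim|D_{k-1}\cdots D_1M_j|\gtrsim(t_{j+k}-t_j)^{-k}$. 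Note that this, and not merely ``making the preliminary estimates available,'' is the true role of the smallness assumption (Lemma \ref{lem:sigma k} only needs $|\Delta|\le 1$): $\varepsilon$ absorbs precisely the weight-commutator terms you left unhandled, and the $\max$-form of Lemma \ref{lem: deriv M est} was engineered for this absorption. With this substitution for your $\Psi_j$ claim, the rest of your argument goes through verbatim.
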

\begin{proof}
 	We note that by the definition of $\phi$ and $\sigma$, we obtain by
	iterated partial integration that
	\begin{equation*}
		\int_{t_{j}}^{t_{j+k}} \phi(t) M_j(t) \dif t = (-1)^{k-1}
		\int_{t_j}^{t_{j+k}} \sigma(t) u(t)\dif t,
	\end{equation*}
	with $u= D_k \cdots D_2 m_{w_1} M_j$.
	Note that the function $u$ can be written in the form
	\[
		u= \sum_{\ell=0}^{k-1}\widetilde{w}_\ell\cdot D_\ell\ldots D_1
		M_j,
	\]
 where $\widetilde{w}_\ell$ are bounded in terms of the weight functions
 and $\widetilde{w}_{k-1}=\frac{w_{k-1}}{w_k}\cdot\ldots\cdot
 \frac{w_2}{w_3}\cdot  \frac{w_1^2}{w_2}=\frac{w_{1}^2}{w_k}$ additionally is
 bounded away from zero.
 Using Lemma \ref{lem: deriv M est} we obtain that there
 exists $\varepsilon>0$ depending only on the weight functions $w_1,\ldots,w_k$ so
 that, if  $|\Delta|\leq \varepsilon$ then 
\begin{equation}\label{eq: u}
\sgn u =\sgn D_{k-1}\ldots D_1 M_j \ \hbox{   and   }\ |u|\gtrsim |D_{k-1}\ldots D_1 M_j|.
\end{equation}
 
	We have $ \sgn\sigma(t)=(-1)^{j+i-k+1}$  for $t\in (t_{j+i},
	t_{j+i+1})$  and also, by using Lemma \ref{lem: deriv M est} we
	obtain $ \sgn D_{k-1}\ldots D_1M_j(t)=(-1)^{i}$  for $t\in (t_{j+i},
	t_{j+i+1})$. Hence, 
	\begin{align*}
		(-1)^{j}\langle\phi,M_j \rangle
&=\sum_{i=0}^{k-1}\int_{t_{j+i}}^{t_{j+i+1}}  (-1)^{j+i-k+1}\sigma(t) \cdot(-1)^{i} u(t)\dif t\\
& =\sum_{i=0}^{k-1}\int_{t_{j+i}}^{t_{j+i+1}} \left| \sigma(t) \cdot  u(t)\right|\dif t=
\int_{t_{j}}^{t_{j+k}}\left| \sigma(t) \cdot  u(t)\right|\dif t.
	\end{align*}
	If $J_j=(t_i,t_{i+1})$ is a largest subinterval of $[t_j,t_{j+k}]$, we
	obtain by  \eqref{eq: u}, Lemma \ref{lem:sigma k} and Lemma \ref{lem: deriv M
	est},
\begin{align*}
		(-1)^{j}\langle\phi,M_j \rangle
		& \gtrsim \int_{t_{i}}^{t_{i+1}} \left| \sigma(t)\right| \cdot
		\left| D_{k-1} \cdots D_1 M_j\right|\dif t  \gtrsim 1,
	\end{align*} 
	which shows both (1) and (2).
\end{proof}

\subsection{Proof of Theorem \ref{thm:endpoint}}
Now,  let  $K(\tau,t)$ be the Dirichlet kernel associated to the
orthogonal projection operator $P_\Delta$  onto $S(\mathcal U_k;\Delta)$, i.e.,
$P_\Delta$ is given by the formula
\begin{equation*}
	P_\Delta f(\tau) = \int_a^b K(\tau,t) f(t) \dif t.
\end{equation*}
Then, by duality,
 \begin{equation}\label{eq:dirichletkernel}
	\sup_{\|f\|_\infty\leq 1} |P_\Delta f(\tau)| = \int_a^b |K(\tau,t)|\dif
	t.
\end{equation}
We  will need the following result concerning the sign of the B-spline coefficients of
	$K(a,\cdot)$.
\begin{lem}\label{lem: ci alter}
	The coefficients $(c_i)$ of  
		$K(a,t) = \sum_{i=0}^n c_i M_i(t)$ 
	satisfy
	\begin{equation*}
		(-1)^{i} c_i \geq 0,\qquad i=0,\ldots,n.
	\end{equation*}
\end{lem}
\begin{proof}
	Observe that $c_i$ is given by
	
\begin{equation*}
	c_i = \sum_{j=0}^n \beta_{ij} M_j(a) = \beta_{i0}
\end{equation*}
where $(\beta_{ij})$  is the inverse to the Gram matrix $B=(\langle M_i,
M_j\rangle)$. Since $B$ is totally positive  (see Section
	\ref{sec:chebsplines}) and the inverse of a
totally positive matrix is checkerboard (i.e. alternates in sign), the lemma is
proved.
\end{proof}

Now let us turn to the proof of Theorem \ref{thm:endpoint}.
By definition of $\phi$, we have  $\phi\in S(\mathcal U_k;\Delta)$ 
and hence 
\[
	\frac{w_1(a)}{w_2(a)} = \phi(a) = P_\Delta\phi(a) = \int_a^b
	K(a,t)\phi(t)\dif t =
	\sum_i c_i \langle \phi,M_i \rangle.
\]
    If 
  $|\Delta|$ is sufficiently
small, then, collecting Theorem \ref{thm: phi_2}, Lemma \ref{lem: ci alter}, we can conclude
  \begin{align*}
	\frac{w_1(a)}{w_2(a)}&=\sum_i |c_i|\cdot |\langle\phi, M_i\rangle| 
	\geq c\cdot \sum_i |c_i| \\ 
	&\geq c\cdot\| \sum_i
	c_iM_i\|_1 = \int_a^b |K(a,t)|\dif t = \sup_{\|f\|_\infty\leq 1}|P_\Delta
	f(a)|. \qedhere
\end{align*}
 The estimate for $P_\Delta f(b)$ is proved similarly by exchanging
	condition (1) on page \pageref{it:sigma-a} for $\sigma$ by $D_3\cdots
	D_k D_{k+1}\sigma(b)=1$.

	 \section{From boundary points to the general case}
 In this section, we are going to prove Theorem \ref{thm:general}.
As an intermediate step, we show the following theorem, which still has
a restriction on the size of the 
partition $\Delta$ in its assumptions.
\begin{thm}\label{thm:generalwithepsilon}
	There exist constants $\varepsilon, K_2 >0$, only depending on  $\mathcal
	U_k$,
	so that if the partition $\Delta$ satisfies
	$|\Delta| \leq \varepsilon$, then the orthogonal projection operator
	$P_\Delta$ onto $S(\mathcal U_k;\Delta)$ satisfies
	\[
		\| P_\Delta f\|_\infty \leq K_2 \|f\|_\infty,\qquad f\in
		L^\infty.
	\]
\end{thm}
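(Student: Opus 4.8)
The plan is to reduce, by duality, to bounding the $L^1$-norm of the representer of point evaluation, and then to turn an arbitrary evaluation point into a boundary point of a sub-interval so that Theorem~\ref{thm:endpoint} becomes applicable. By \eqref{eq:dirichletkernel} it suffices to bound $\int_a^b|K(\tau,t)|\dif t$ uniformly in $\tau\in[a,b]$ and in $\Delta$ with $|\Delta|\le\varepsilon$. Writing $K(\tau,\cdot)=\sum_i c_i(\tau)M_i$ and invoking the stability of $(N_i)$ with $N_i=\alpha_iM_i$, one has $\int_a^b|K(\tau,t)|\dif t=\|\sum_i c_i(\tau)M_i\|_1\simeq\sum_i|c_i(\tau)|$, so the task is to bound $\sum_i|c_i(\tau)|$. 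For $\tau=a$ this is exactly Theorem~\ref{thm:endpoint}: there $c_i(a)=\beta_{i0}$ is the first column of the inverse Gram matrix, its sign pattern is checkerboard by Lemma~\ref{lem: ci alter}, and the test spline of Theorem~\ref{thm: phi_2} forces $\sum_i|c_i(a)|\lesssim1$. The obstruction for interior $\tau$ is that $c_i(\tau)=\sum_j\beta_{ij}M_j(\tau)$ mixes several columns of $B^{-1}=(\beta_{ij})$, so it no longer alternates in $i$ and the spline $\phi$ cannot be fed in directly.

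To remove this obstruction I would localize. Fix $\tau$ in a grid interval $[t_m,t_{m+1}]$, so that $M_j(\tau)\neq0$ only for $j\in\{m-k+1,\dots,m\}$. Consider the sub-grids $\Delta_L$ and $\Delta_R$ obtained by restricting $\Delta$ to $[a,t_{m+1}]$ and to $[t_m,b]$ and repeating the cut point as a boundary knot of multiplicity $k$. Since the quantities $\min_I w_i$ and $\max_I|D^jw_i|$ governing the implied constants can only improve on a sub-interval, Theorem~\ref{thm:endpoint} applies on each piece with the \emph{same} constants $\varepsilon,K_1$, and because $|\Delta_L|,|\Delta_R|\le|\Delta|\le\varepsilon$ it yields $L^1$-bounds for the Dirichlet kernels $K_L,K_R$ of $P_{\Delta_L},P_{\Delta_R}$ evaluated at the cut point, which is now a boundary point of each sub-interval. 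The plan is then to dominate the interior coefficient sum $\sum_i|c_i(\tau)|$ of the global problem by the boundary coefficient sums of the two sub-problems.

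The main obstacle is precisely this comparison. Functions in $S(\mathcal U_k;\Delta)$ restrict to elements of $S(\mathcal U_k;\Delta_L)$ and $S(\mathcal U_k;\Delta_R)$ and are continuous across the cut, so the banded totally positive Gram matrix $B=(\langle M_i,M_j\rangle)$ sits inside the block-diagonal Gram matrix of the decoupled space $S(\mathcal U_k;\Delta_L)\oplus S(\mathcal U_k;\Delta_R)$. Exploiting that $B$ is banded and totally positive, hence $B^{-1}$ checkerboard, I would compare the rows of $B^{-1}$ indexed by the B-splines overlapping $\tau$ with the boundary columns of the inverse Gram matrices of the sub-blocks associated to $\Delta_R$ and $\Delta_L$, and show that the former are dominated, up to the uniform constants hidden in $\lesssim$, by the latter. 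Equivalently, one must show that cutting the grid at a knot does not decrease the relevant entries of the inverse Gram matrix; establishing this monotone comparison between the inverse of the full banded totally positive matrix and the inverses of its principal sub-blocks is the crux of the argument.

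Once this comparison is in place, Theorem~\ref{thm:endpoint} applied on $[t_m,b]$ and on $[a,t_{m+1}]$ bounds $\sum_i|c_i(\tau)|$, and therefore $\int_a^b|K(\tau,t)|\dif t\lesssim1$ uniformly in $\tau$ and in $\Delta$ with $|\Delta|\le\varepsilon$. Taking the supremum over $\tau$ and using \eqref{eq:dirichletkernel} yields $\|P_\Delta f\|_\infty\le K_2\|f\|_\infty$ with $K_2$ depending only on $\mathcal U_k$, which is the assertion. I expect the total-positivity comparison to be the only step requiring genuinely new work; the duality reduction, the passage to sub-grids, and the invariance of the constants under restriction are routine given the preliminaries.
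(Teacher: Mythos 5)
Your overall strategy---cut the grid near the evaluation point $\tau$ and reduce to Theorem \ref{thm:endpoint} applied at the new boundary point of the two sub-grids---is in spirit the same reduction the paper performs, but your execution has a genuine gap: the step you yourself call the crux, namely the ``monotone comparison'' dominating the relevant columns of $B^{-1}=(\beta_{ij})$ for the full grid by the boundary columns of the inverse Gram matrices of the sub-grids, is not proved, and there is no reason to believe it admits a soft total-positivity proof. First, your algebraic premise is incorrect: $S(\mathcal U_k;\Delta)$ is indeed a subspace of $S(\mathcal U_k;\Delta_L)\oplus S(\mathcal U_k;\Delta_R)$, but the Gram matrix $B$ does not ``sit inside'' the block-diagonal Gram matrix $\tilde B$ of the decoupled space as a sub-block; the full-grid B-splines straddling the cut are not members of the decoupled basis, and expressing them requires knot insertion (raising the cut point to multiplicity $k$), i.e. $B=E\tilde B E^{\top}$ for a rectangular banded matrix $E$, and inverses do not compare entrywise under such a congruence. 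Second, for the honest sub-block comparison the inequality points the wrong way: if $B_S$ is a principal submatrix of the symmetric positive definite matrix $B$, the Schur-complement identity gives $(B^{-1})_{SS}\succeq (B_S)^{-1}$, so for instance diagonal entries of the inverse are \emph{larger} on the full grid than on the cut grid, whereas your argument needs the full-grid quantities to be dominated by the sub-grid ones. Third, total positivity by itself only yields the checkerboard sign pattern of $B^{-1}$ (as in Lemma \ref{lem: ci alter}), never magnitudes; the only quantitative bound on $B^{-1}$ available, Corollary \ref{cor:geom}, is itself derived from the uniform boundedness of $P_\Delta$ that you are trying to prove, so invoking anything of that kind would be circular.

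The paper avoids inverse Gram matrices entirely for interior points: it verifies the four localization properties --- the de Boor-type coefficient bound \eqref{eq:localdeBoor} (coming from the dual functionals \eqref{eq:dual}), the fact that the restricted B-splines form bases of $V(\Delta_{x,\ell})$ and $V(\Delta_{x,r})$, the local Markov-type inequality, and heredity of the condition $|\Delta|\le\varepsilon$ under cutting --- and then invokes Theorem \ref{thm:goli}, which carries out the interior-to-boundary reduction at the level of the projection operators themselves (comparing $P_{V(\Delta)}f$ with the projections onto the sub-grid spaces), not at the level of matrices. Your duality set-up via \eqref{eq:dirichletkernel}, the stability argument giving $\int_a^b|K(\tau,t)|\dif t\simeq\sum_i|c_i(\tau)|$, the passage to sub-grids, and the observation that the weight-dependent constants only improve under restriction are all fine; but without a proof of the matrix comparison your argument does not close, and that comparison is precisely where the difficulty of the theorem resides.
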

In the case of polynomial splines (i.e. the weights $w_1,\ldots, w_k$ each set
to be the constant function $1$), it was shown in \cite{Golitschek2014} that
Theorem \ref{thm:endpoint} (in the form for polynomial splines without the
condition on $|\Delta|$) implies this result. By analyzing the proof in
\cite{Golitschek2014}, we see that this transition can be obtained in a more
general setting which we now describe:

For a partition $\Delta = \{a=t_{k-1} < t_{k} < \cdots <t_{n} < t_{n+1}=b\}$ and for
 $x\in
(a,b)$, we let $m(x)$ be the unique index so that $x\in (t_{m(x)},
t_{m(x)+1}]$ and define the subpartition $\Delta_{x,\ell}=\{t_{k-1}< t_{k}< \cdots<
t_{m(x)}<x\}$ to the left of $x$ and the subpartition  $\Delta_{x,r}= \{x\leq
	t_{m(x)+1} < \cdots < t_{n+1}\}$ to the right of $x$.
To each partition $\Delta$, we associate a vector space $V(\Delta)$ of real
valued functions defined on $[a,b]$ and with it a basis
$(Q_\ell^\Delta)_{\ell=0}^{n}$ of $V(\Delta)$ consisting of functions with
local support $\supp Q_\ell^\Delta = [t_\ell,t_{\ell+k}]$, where we extend
$\Delta$ in the known way $t_{0} = \cdots = t_{k-2} = t_{k-1}$ and $t_{n+k} =
\cdots = t_{n+2} = t_{n+1}$.
Let $P_{V(\Delta)}$ denote the orthogonal projection operator with respect to
Lebesgue measure onto $V(\Delta)$.
Moreover we assume that $\mathcal C(\Delta)$ is a condition that depends on the
partition $\Delta$ and we require that the objects $V(\Delta)$ and $\mathcal
C(\Delta)$ are compatible in the following sense:

there exists a constant $C$ so that for all partitions $\Delta=\{t_{k-1} < \cdots <
t_{n+1}\}$
satisfying $\mathcal C(\Delta)$, the following conditions are satisfied:

%
\begin{enumerate}
	\item for all functions $f=\sum_i c_i Q_i^\Delta\in V(\Delta)$, 
		\begin{equation}\label{eq:localdeBoor}
			|c_\ell| \leq C \Big\|\sum_{i} c_i
			Q_i^\Delta \Big\|_{L^\infty(J_\ell)}, \qquad \ell =
			0,\ldots, n,
		\end{equation}
		where $J_\ell$ denotes the largest subinterval $[t_i,t_{i+1}]$
		of $\supp Q_\ell^\Delta = [t_\ell,t_{\ell+k}]$,
\item for any $x\in (a,b)$, the functions  $(Q_j^\Delta
		\charfun_{[a,x]})_{j=0}^{m(x)}$ form a basis of
	$V(\Delta_{x,\ell})$ and  the functions
	 $(Q_j^\Delta \charfun_{[x,b]})_{r=m(x)-k+1}^{n}$ form a basis of
	$V(\Delta_{x,r})$,
	\item for all $f\in V(\Delta)$, all indices $j$ and all subintervals $E$
	of $I_j=[t_j,t_{j+1}]$,
		 \begin{equation*}
			\| f \|_{L^\infty(I_j)} \leq C \frac{|I_j|^{k-1}}{|E|^{k}}
			\|f\|_{L^1(E)},
		\end{equation*}
	\item for all $x\in (a,b)$, $\mathcal C(\Delta_{x,\ell})$ and $\mathcal
		C(\Delta_{x,r})$ are both satisfied.
\end{enumerate}

Then, if we follow the arguments in \cite{Golitschek2014}, we have
the following theorem:
\begin{thm}\label{thm:goli}
	Assume that to each partition  $\Delta=\{a=t_{k-1}< \cdots <
	t_{n+1}=b\}$, we associate a function space
	$V(\Delta)$ so that the collection of all spaces $V(\Delta)$ with
	the conditions $\mathcal
	C(\Delta)$ have properties (1)--(4).
	Moreover, assume that there exists a constant $K_1$ so that for all partitions
	$\Delta$ satisfying
	$\mathcal  C(\Delta)$ we have
	 $\sup_{\|f\|_\infty \leq 1} \big(|P_{V(\Delta)} f(a)|+|P_{V(\Delta)}
	f(b)|\big)\leq K_1$.
	
	Then there exists a
	constant $K_2$ so that for all $\Delta$ satisfying $\mathcal C(\Delta)$,
	\begin{equation*}
		 \|P_{V(\Delta)} f\|_\infty \leq K_2\|f\|_\infty,\qquad f\in
		 L^\infty[a,b].
	\end{equation*}
\end{thm}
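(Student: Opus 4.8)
The plan is to reduce the uniform $L^\infty$ bound to a uniform $L^1$ bound on the reproducing (Dirichlet) kernel of $P_{V(\Delta)}$ and then to propagate the boundary estimate into the interior by an induction on the number of grid intervals, in the spirit of \cite{Golitschek2014}. Since an orthogonal projection is self-adjoint, $\|P_{V(\Delta)}\|_{L^\infty\to L^\infty}=\|P_{V(\Delta)}\|_{L^1\to L^1}$, and by the same duality used in \eqref{eq:dirichletkernel} it suffices to bound $\int_a^b |K_\Delta(x,t)|\dif t$ uniformly in $x\in[a,b]$ and in all $\Delta$ satisfying $\mathcal C(\Delta)$, where $K_\Delta(x,\cdot)\in V(\Delta)$ is the Riesz representer of the evaluation functional $v\mapsto v(x)$. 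For the two endpoints $x=a$ and $x=b$ this quantity equals $\sup_{\|f\|_\infty\le1}|P_{V(\Delta)}f(x)|$ and is $\le K_1$ by hypothesis; the entire difficulty is to transport this control to interior $x$.

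For interior $x$ I would exploit the subpartition structure supplied by conditions (2) and (4). Writing $K_\Delta(x,\cdot)=\sum_i c_i(x)Q_i^\Delta$ — where, exactly as in Lemma \ref{lem: ci alter}, the $c_i(x)$ are obtained from the inverse Gram matrix — the local-support hypothesis gives $\charfun_{[a,x]}K_\Delta(x,\cdot)\in V(\Delta_{x,\ell})$ and $\charfun_{[x,b]}K_\Delta(x,\cdot)\in V(\Delta_{x,r})$, and every $v\in V(\Delta)$ restricts to these two spaces, so $V(\Delta)$ embeds in their orthogonal direct sum. The de Boor functional estimate in condition (1) yields, by the dual-functional argument, that $\int_a^b|K_\Delta(x,t)|\dif t$ is comparable to the weighted sum $\sum_i|c_i(x)|\,\|Q_i^\Delta\|_{L^1}$, so it is enough to control the coefficient sequence. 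I would then compare $K_\Delta(x,\cdot)$ on $[a,x]$ with the boundary representer $K_{\Delta_{x,\ell}}(x,\cdot)$ of $V(\Delta_{x,\ell})$, whose $L^1[a,x]$-norm is $\le K_1$ because $x$ is the right endpoint of $\Delta_{x,\ell}$ and $\mathcal C(\Delta_{x,\ell})$ holds by condition (4); symmetrically on $[x,b]$. This ties the behaviour of $K_\Delta(x,\cdot)$ near $x$ to the assumed boundary bound, with the subpartitions serving as the smaller instances of an induction on the number of grid intervals.

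The main obstacle is that this comparison is, at face value, circular: both $K_\Delta(x,\cdot)$ and the glued boundary representers reproduce evaluation at $x$ on $V(\Delta)$, so crudely bounding their difference only reproduces the very operator norm one is trying to estimate. The technical heart — precisely as in \cite{Golitschek2014} — is to show that the coefficients $c_i(x)$ decay geometrically in $|i-m(x)|$ with a rate governed solely by $K_1$ and the structural constant $C$. This is where the inverse inequality (3) is indispensable: it is what makes the interaction of $P_{V(\Delta)}$ across the splitting point $x$ a genuine contraction, by bounding the $L^1$-mass that $P_{V(\Delta)}$ of a function supported on $[a,x]$ can deposit far to the right of $x$ by a fixed fraction rather than by the full norm, while condition (1) converts this back into the coefficient-decay statement. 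Propagating the decay along the grid and summing the resulting geometric series closes the induction and produces $K_2$ depending only on $K_1$ and $C$. The near-boundary intervals, where one subpartition is as large as $\Delta$ itself and the induction gains nothing, must be handled separately, again invoking (3) to dominate $\|K_\Delta(x,\cdot)\|_{L^\infty}$ on the first and last grid interval by its $L^1$-norm there, combined with the already-established endpoint value.
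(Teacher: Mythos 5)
Your setup is sound and matches the intended line of argument: the duality reduction to bounding $\int_a^b|K_\Delta(x,t)|\dif t$, the splitting of $[a,b]$ at an interior point $x$, the observation (via condition (2)) that $\charfun_{[a,x]}K_\Delta(x,\cdot)\in V(\Delta_{x,\ell})$, and the use of condition (4) together with the hypothesis to get the bound $K_1$ for the boundary kernels of the two subpartitions --- this is exactly how the transition from the boundary to the interior begins. You also correctly diagnose that a naive comparison of $K_\Delta(x,\cdot)$ with the glued boundary representers is circular, because the error term is again controlled only by the operator norm one is trying to bound.

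But that is where your proposal stops being a proof. The entire content of the theorem is the step that breaks this circularity, and you replace it by an assertion: that conditions (1) and (3) make ``the interaction of $P_{V(\Delta)}$ across the splitting point a genuine contraction'' and hence force geometric decay of the coefficients $c_i(x)$ at a rate governed only by $K_1$ and $C$. No argument is given for this, and as stated it is not plausible as an immediate consequence: condition (3) is a purely local Remez/inverse-type inequality on a single grid interval, and nothing in your text explains how a one-interval estimate yields a contraction factor strictly less than $1$, uniform over all partitions satisfying $\mathcal C(\Delta)$, for the mass that $P_{V(\Delta)}$ transports across knots, nor how condition (1) then converts such a statement into decay of the kernel coefficients. (A smaller instance of the same looseness: your claimed equivalence $\int|K_\Delta(x,t)|\dif t\simeq\sum_i|c_i(x)|\,\|Q_i^\Delta\|_{L^1}$ requires $\|Q_i^\Delta\|_{L^1}\lesssim|J_i|$, which holds for B-splines but is not among the abstract hypotheses (1)--(4).) Controlling this error --- equivalently, bounding in $L^1$ the reproducing kernel of the finite-dimensional orthogonal complement of $V(\Delta)$ inside $V(\Delta_{x,\ell})\oplus V(\Delta_{x,r})$ --- is precisely the substance of the interior-extension argument of \cite{Golitschek2014}, which is the very proof the paper invokes for this theorem and for which the conditions (1)--(4) were distilled. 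By deferring exactly this step to ``precisely as in \cite{Golitschek2014}'', your proposal leaves the only nontrivial claim unproved; to complete it you would need to carry out that estimate (for instance, a Demko-type iteration combining the orthogonality relations with (1) and (3)) and check that it uses nothing beyond (1)--(4) and the boundary hypothesis.
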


In order to prove Theorem \ref{thm:generalwithepsilon}, we apply Theorem
\ref{thm:goli} to Chebyshev splines as follows:
we let $\mathcal C(\Delta)$ be the condition
$|\Delta| \leq \varepsilon$ with the parameter $\varepsilon$ from Theorem
\ref{thm:endpoint}. The local basis $(Q_i^\Delta)$ will be the collection of the
B-spline functions $(N_i)$ and $V(\Delta)=S(\mathcal U_k;\Delta)$.
Those settings satisfy conditions (1)--(4). Indeed, (1) is a consequence of
\eqref{eq:dual}, (2) follows from the definition of $S(\mathcal
U_k;\Delta)$, (3) follows from the Markov inequality \eqref{eq:markov}  and
Taylor expansion of functions in $\mathcal U_k$ and (4)
is satisfied by definition of $\mathcal C(\Delta)$.
Thus, Theorem \ref{thm:generalwithepsilon} is a consequence of  Theorem
	\ref{thm:endpoint} and Theorem \ref{thm:goli}.

Next, we eliminate the restriction $|\Delta|\leq
	\varepsilon$ in Theorem \ref{thm:generalwithepsilon}. For this purpose,
	we will need the following geometric decay inequality of the inverse of
	the Gram matrix of B-spline functions:
	\begin{cor}\label{cor:geom}
		Suppose that $\Delta$ is such that $\|P_\Delta:L^\infty\to L^\infty\|\leq K$
		for some constant $K$. Then,
		there exist  two constants $C$ and $q<1$ depending only on $K$ so that 
		 
	\begin{equation*}
		|a_{ij}| \leq \frac{C q^{|i-j|}}{\alpha_i+\alpha_j},
	\end{equation*}
	where $(a_{ij})$ denotes the inverse of the matrix $(\langle N_i,
	N_j\rangle)$.
\end{cor}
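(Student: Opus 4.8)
The plan is to read the entries $a_{ij}$ off from the dual B-spline functions and to prove that these dual functions are exponentially localized, with a localization rate controlled by $K$. Set $g_i:=\sum_j a_{ij}N_j\in S(\mathcal U_k;\Delta)$; since $(a_{ij})$ inverts the Gram matrix, $g_i$ is characterized by $\langle g_i,N_m\rangle=\delta_{i,m}$. The key first reduction is to recognize $g_i$ as the image under $P_\Delta$ of a \emph{localized} source: using the dual functionals $\lambda_i$ from Section~\ref{sec:prel}, written as $\lambda_i f=\langle\Lambda_i,f\rangle$ with $\Lambda_i=\alpha_i^{-1}L_k^*\psi_i$ supported in the largest subinterval $J_i\subset[t_i,t_{i+k}]$, one has $\langle P_\Delta\Lambda_i,N_m\rangle=\lambda_iN_m=\delta_{i,m}$, so that $g_i=P_\Delta\Lambda_i$. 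From the bounds $\|\Lambda_i\|_1\lesssim 1$ and $\|\Lambda_i\|_\infty\lesssim\alpha_i^{-1}$ (the functional $\lambda_i$ is an $L^\infty$-bounded bump of height $\simeq\alpha_i^{-1}$ on an interval of length $\simeq\alpha_i$), together with the self-adjointness identity $\|P_\Delta\|_{1\to1}=\|P_\Delta\|_{\infty\to\infty}\le K$, I would obtain the scale estimates $\|g_i\|_1\le K\|\Lambda_i\|_1\lesssim K$ and $\|g_i\|_\infty\le K\|\Lambda_i\|_\infty\lesssim K/\alpha_i$, which fix the correct normalization for the final weight.

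The heart of the argument, and the main obstacle, is a Demko-type geometric-decay estimate for the B-spline coefficients of $g_i$: I claim there are constants $C$ and $q<1$ depending only on $K$ with $|a_{ij}|\le (C/\alpha_i)\,q^{|i-j|}$. The mechanism is the same as for polynomial splines: because $g_i$ is $L^2$-orthogonal to every $N_m$ with $m\ne i$ and the source $\Lambda_i$ lives near $J_i$, to the right of a knot $t_s$ with $s>i+k$ the function $g_i$ is a spline orthogonal to all right-hand B-splines; bandedness of the Gram matrix (the supports of $N_i$ and $N_j$ are disjoint once $|i-j|\ge k$, so $\langle N_i,N_j\rangle=0$ there) couples the two sides only through the $k$ B-splines straddling $t_s$, and the uniform bound $\|P_\Delta\|\le K$ forces the coefficients just to the right of $t_s$ to be a bounded multiple of those just to the left. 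Iterating this comparison over consecutive blocks of $k$ knots yields a fixed contraction factor $q=q(K)<1$ per block, hence the exponential decay. Producing this explicit rate is exactly the Demko step, but it must be carried out in the mixed $L^1$–$L^\infty$ scaling: the symmetric $L^2$ normalization (where $\|N_i\|_2\simeq\alpha_i^{1/2}$) applied to the banded positive-definite matrix would only produce the weaker weight $1/\sqrt{\alpha_i\alpha_j}$, whereas the statement requires the sharp $1/(\alpha_i+\alpha_j)\simeq 1/\max(\alpha_i,\alpha_j)$, so the boundedness of $P_\Delta$ in the $L^\infty$/$L^1$ norm must be used essentially rather than through the automatic $\|P_\Delta\|_2=1$.

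Finally I would assemble the estimate and symmetrize. The decay bound from the previous step reads $|a_{ij}|\lesssim (K/\alpha_i)\,q^{|i-j|}$, with the weight $1/\alpha_i$ coming from $\|g_i\|_\infty\lesssim K/\alpha_i$ and $\|N_m\|_\infty\simeq 1$. Since $a_{ij}=a_{ji}$, running the same argument with the roles of $i$ and $j$ interchanged gives $|a_{ij}|\lesssim (K/\alpha_j)\,q^{|i-j|}$, and combining the two yields
\[
 |a_{ij}|\lesssim K\,\frac{q^{|i-j|}}{\max(\alpha_i,\alpha_j)}\simeq \frac{C\,q^{|i-j|}}{\alpha_i+\alpha_j},
\]
with $C$ and $q<1$ depending only on $K$, as claimed. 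Besides the block-contraction estimate, the one point needing care is the use of the functionals $\lambda_i$ and the bounds $\|\Lambda_i\|_1\lesssim 1$, $\|\Lambda_i\|_\infty\lesssim\alpha_i^{-1}$ for an \emph{arbitrary} partition $\Delta$, since the estimate \eqref{eq:dual} was recorded only under $|\Delta|\le 1$; here the mesh is unrestricted, so I would either re-derive the $p=\infty$ bound for $\lambda_i$ directly from the ECT structure or replace $\Lambda_i$ by any localized representer of the $i$-th coefficient functional on $J_i$, which is all the argument actually uses.
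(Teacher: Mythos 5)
Your overall skeleton---the dual functions $g_i=N_i^*=P_\Delta\Lambda_i$, the normalization bounds $\|g_i\|_1\lesssim K$ and $\|g_i\|_\infty\lesssim K/\alpha_i$ extracted from the hypothesis via self-adjointness, and the final symmetrization $a_{ij}=a_{ji}$ turning $1/\alpha_i$ into $1/\max(\alpha_i,\alpha_j)\simeq 1/(\alpha_i+\alpha_j)$---is the right frame, and your observation that a purely $L^2$-normalized argument can only ever produce the weight $1/\sqrt{\alpha_i\alpha_j}$ is correct and important. (Your worry about \eqref{eq:dual} requiring $|\Delta|\le 1$ is moot here, since the paper works on $[0,1]$.) But the middle step, which you yourself call the heart---$|a_{ij}|\lesssim q^{|i-j|}/\alpha_i$ with $q=q(K)<1$---is not proved, and the mechanism you describe does not produce it. Saying that the uniform bound ``forces the coefficients just to the right of $t_s$ to be a bounded multiple of those just to the left'' and that iterating this ``yields a fixed contraction factor per block'' is not an argument: a comparison $|c_{\mathrm{right}}|\le C|c_{\mathrm{left}}|$ with $C\ge1$ iterates to growth, not decay. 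The genuine contraction in the classical Demko/de Boor argument comes from orthogonality of $g_i$ to all B-splines supported in the tail, and that argument is intrinsically an $L^2$ argument; run naively it returns exactly the weight $1/\sqrt{\alpha_i\alpha_j}$ you rejected. Nor can the defect be repaired afterwards by combining your two bounds $|a_{ij}|\lesssim q_0^{|i-j|}/\sqrt{\alpha_i\alpha_j}$ and $|a_{ij}|\lesssim K/\max(\alpha_i,\alpha_j)$: any weighted geometric mean of them carries a factor $\big(\max(\alpha_i,\alpha_j)/\min(\alpha_i,\alpha_j)\big)^{\theta/2}$, which is unbounded over arbitrary partitions. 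So the proposal is missing precisely the quantitative content of the corollary.

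The route the paper intends (its proof is a reference to \cite{Ciesielski2000} and \cite{PassenbrunnerShadrin2014}) closes this gap with Demko's theorem for banded matrices, and your own first step already contains the required input. From $\|g_i\|_1\lesssim K$ and the $L^p$-stability of the $(N_i)$ stated at the end of Section 2 (with $p=1$), one gets the \emph{operator} bound
\begin{equation*}
  \sup_i\sum_j|a_{ij}|\,\alpha_j\ \simeq\ \sup_i\|g_i\|_1\ \lesssim\ K,
\end{equation*}
i.e.\ $G^{-1}D$ is bounded on $\ell^\infty$, where $G=(\langle N_i,N_j\rangle)$ and $D=\operatorname{diag}(\alpha_j)$. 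Conversely, $B:=D^{-1}G$ is bounded on $\ell^\infty$ with norm at most $\|u_1\|_\infty$, since its entries $\langle M_i,N_j\rangle$ are nonnegative and each row sums to $\langle M_i,u_1\rangle$ by \eqref{eq:partunity}; and $B$ is banded, because $\langle N_i,N_j\rangle=0$ for $|i-j|\ge k$. Thus $B$ is a banded matrix, bounded and boundedly invertible on $\ell^\infty$ with constants depending only on $K$ and $\mathcal U_k$, and Demko's theorem for banded matrices on $\ell^p$ (valid at $p=\infty$, e.g.\ by passing to the transpose acting on $\ell^1$) yields $|(B^{-1})_{ij}|=|a_{ij}|\alpha_j\le Cq^{|i-j|}$; your symmetrization then finishes exactly as you wrote it. Note that the entrywise bound $|a_{ij}|\lesssim K/\alpha_i$, which is all that $\|g_i\|_\infty\lesssim K/\alpha_i$ gives via \eqref{eq:localdeBoor}, is not sufficient input for Demko's theorem: it is the row-sum (operator) bound above that encodes the hypothesis $\|P_\Delta\|_\infty\le K$ in the form the decay argument needs.
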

This inequality is a consequence of Theorem
\ref{thm:generalwithepsilon} in the same way as its polynomial spline
counterpart in \cite{Ciesielski2000} (see also
	\cite{PassenbrunnerShadrin2014}) is a consequence of Shadrin's theorem.
Using this geometric decay inequality, we can prove the  main theorem: 

\begin{proof}[ Proof of Theorem \ref{thm:general}]
We assume that  $\tilde\Delta$ is a partition and $\Delta$ is a partition that we get
from adding one point to $\tilde\Delta$  in the middle of the largest grid  interval
$I$ in $\tilde{\Delta}$
with length $> \varepsilon$ and we assume that $\|P_\Delta:L^\infty\to L^\infty\|\leq K$.
Then, by definition of the orthogonal projections $P=P_\Delta$ and
 $\tilde{P}=P_{\tilde{\Delta}}$, we have,
if $(N_i)$ denotes the Chebyshevian B-spline basis of $S(\mathcal
U_k;\Delta)$,
\begin{equation*}
	\langle (\tilde P - P)f,N_i\rangle=0
\end{equation*}
if $|I\cap \supp N_i|=0$ 
since in this case $N_i$ is both in the range of $P$ and $\tilde{P}$ and thus
$\langle \tilde{P}f - f, N_i\rangle = \langle Pf - f, N_i\rangle = 0$.
Thus, we can expand
\begin{equation*}
	(\tilde{P} - P)f = \sum_{j:|\supp N_j\cap I|>0} c_j N_j^*
\end{equation*}
with $c_j=\langle (\tilde{P} - P)f,N_j\rangle$ and $(N_i^*)$ being the dual
basis to $(N_i)$ which is given by $N_j^*=\sum_i a_{ji} N_i$. Here, as in
Corollary \ref{cor:geom},  $(a_{ij})$
denotes the inverse of the matrix $(\langle N_i,N_j\rangle)$. Since $\tilde{P}$ and $P$ are
orthogonal projections, their $L^2$-norm equals $1$ and we can estimate,
 for $j$ with $|\supp N_j\cap I|>0$,
\begin{equation*}
	|c_j| \lesssim \|f\|_2 \|N_j\|_2 \lesssim \|f\|_\infty |\supp N_j|^{1/2}
	\lesssim \|f\|_\infty |I|^{1/2}.
\end{equation*}
On the other hand, by the above result on the entries $a_{ij}$ of the inverse
matrix to $(\langle N_i, N_j\rangle)$ and \eqref{eq:alphasupp},
\begin{equation*}
	|N_j^*(x)|= |\sum_{i} a_{ij}N_i(x)|\lesssim
	\frac{q^{|j-\ell|}}{{\alpha_j} } \lesssim \frac{q^{|j-\ell|}}{|I|}
\end{equation*}
for $j$ with $|\supp N_j\cap I|>0$, where the index $\ell$ is chosen such that 
$x\in [t_\ell,t_{\ell+1}]$.
Therefore, we get
\[
	\|(\tilde P - P)f\|_\infty \lesssim  |I|^{-1/2} \|f\|_\infty \lesssim
	\varepsilon^{-1/2} \|f\|_\infty
\]
and  we conclude
\[
	\|\tilde{P} f\|_\infty \leq \|Pf\|_\infty + \|(P-\tilde{P})f\|_\infty
	\leq (K + C\varepsilon^{-1/2})\|f\|_\infty,
\]
where $C$ is some constant that depends only on $\mathcal U_k$.

 In order to go from an arbitrary partition $\tilde{\Delta}$ to a partition
$\Delta$ with $\|P_\Delta:L^\infty\to L^\infty\|\leq K_2$, we can apply the above construction
iteratively, until we arrive at a partition $\Delta$ with $|\Delta|\leq
\varepsilon$, where $\varepsilon>0$ is from Theorem \ref{thm:generalwithepsilon}.
Then, it is guaranteed by Theorem \ref{thm:generalwithepsilon}
that $\|P_\Delta:L^\infty\to L^\infty\|\leq K_2$. The number of iteration steps depends only
on $\varepsilon$ and thus, only on $\mathcal U_k$. Therefore, 
\[
	\|\tilde{P} f\|_\infty \lesssim
	\|f\|_\infty,
\] which finishes the proof of the main theorem.
\end{proof}

\subsection*{Acknowledgements } The second author is supported by the FWF-project
Nr.P27723.

 \bibliographystyle{plain}
\bibliography{projection}

\end{document}